\documentclass[oneside]{amsart}
\usepackage[latin9]{inputenc}
\usepackage{amstext}
\usepackage{amsthm}
\usepackage{amssymb}
\usepackage{esint}

\makeatletter
\theoremstyle{plain}
\newtheorem{thm}{\protect\theoremname}[section]
  \theoremstyle{plain}
  \newtheorem{assumption}[thm]{\protect\assumptionname}
  \theoremstyle{remark}
  \newtheorem{rem}[thm]{\protect\remarkname}
  \theoremstyle{definition}
  \newtheorem{example}[thm]{\protect\examplename}
  \theoremstyle{plain}
  \newtheorem{lem}[thm]{\protect\lemmaname}
  \theoremstyle{definition}
  \newtheorem{defn}[thm]{\protect\definitionname}

\usepackage{hyperref}
\usepackage{url}
\usepackage{cite}

\renewcommand{\div}{\operatorname{div}}
\newcommand{\sgn}{\operatorname{sgn}}

\numberwithin{equation}{section}

\newcommand{\Bcal} {{\mathcal B}}
\newcommand{\Ccal} {{\mathcal C}}
\newcommand{\Dcal} {{\mathcal D}}

\newcommand{\Fcal} {{\mathcal F}}

\newcommand{\Ocal} {{\mathcal O}}

\newcommand{\R}{\mathbb{R}}

\renewcommand{\P}{\mathbb{P}}
\newcommand{\E}{\mathbb{E}}

\subjclass[2010]{Primary: 35K55, 35K92, 60H15; Secondary: 49J40, 49J45, 60H25;}

\makeatother

  \providecommand{\assumptionname}{Assumption}
  \providecommand{\definitionname}{Definition}
  \providecommand{\examplename}{Example}
  \providecommand{\lemmaname}{Lemma}
  \providecommand{\remarkname}{Remark}
\providecommand{\theoremname}{Theorem}

\begin{document}

\title[Nonlinear SPDE with gradient Stratonovich noise]{Nonlinear stochastic partial differential equations with singular
diffusivity and gradient Stratonovich noise}

\author{Ioana Ciotir}

\address{Normandie Univ\\
INSA Rouen, LMI\\
76000 Rouen\\
France}

\email{ioana.ciotir@insa-rouen.fr}

\author{Jonas M. Tölle}

\address{Department of Mathematics and Systems Analysis\\
School of Science\\
Aalto University\\
P.O. Box 11100\\
FI-00076 Aalto\\
Finland}

\email{jonas.tolle@aalto.fi}

\keywords{Stochastic variational inequality, singular diffusivity, nonlinear
singular-degenerate stochastic partial differential equation, total
variation flow, multiplicative gradient-type Stratonovich noise, singular
$p$-Laplace, Killing vector field, binary tomography, stochastic
methods in mathematical image processing.}

\thanks{I. C. was partially supported by the European Union with the European
regional development fund (ERDF) and by the Haute-Normandie Regional
Council (M2NUM Project). J. M. T. gratefully acknowledges funding
granted by the CRC 701 ``Spectral Structures and Topological Methods
in Mathematics'' (Bielefeld) of the German Research Foundation (DFG)}

\date{\today}
\begin{abstract}
We study existence and uniqueness of a variational solution in terms
of stochastic variational inequalities (SVI) to stochastic nonlinear
diffusion equations with a highly singular diffusivity term and multiplicative
Stratonovich gradient-type noise. We derive a commutator relation
for the unbounded noise coefficients in terms of a geometric Killing
vector condition. The drift term is given by the total variation flow,
respectively, by a singular $p$-Laplace-type operator. We impose
nonlinear zero Neumann boundary conditions and precisely investigate
their connection with the coefficient fields of the noise. This solves
an open problem posed in {[}Barbu, Brze\'{z}niak, Hausenblas, Tubaro;
Stoch. Proc. Appl., 123 (2013){]} and {[}Barbu, Röckner; J. Eur. Math.
Soc., 17 (2015){]}.
\end{abstract}

\maketitle

\section{Introduction}

We consider existence and uniqueness of solutions to the following
(multi-valued) nonlinear Stratonovich stochastic diffusion equation
in $L^{2}(\mathcal{{O}})$, 
\begin{equation}
\left\{ \begin{aligned}dX_{t} & \in\div\left[\sgn\left(\nabla X_{t}\right)\right]dt+\sum_{i=1}^{N}\langle b_{i},\nabla X_{t}\rangle\,\circ d\beta_{t}^{i}, &  & \text{in\ensuremath{\;(0,T)\times\mathcal{{O}}},}\\
X_{0} & =x, &  & \text{in}\ensuremath{\;\mathcal{{O}}},\\
\dfrac{\partial X_{t}}{\partial\nu} & =0, &  & \text{on}\;(0,T)\times\mathcal{\partial{O}},
\end{aligned}
\right.\label{eq:main_equation}
\end{equation}
where $\mathcal{O}$ is an open, bounded domain in $\mathbb{R}^{d}$,
$d\ge2$, with (sufficiently) smooth boundary such that $\mathcal{{O}}$
or $\partial\mathcal{{O}}$ is convex. Here, for $N\ge1$, $b_{i}:\overline{\Ocal}\to\R^{d}$,
$1\le i\le N$ are ``coefficient fields'' and $\beta=(\beta^{1},\ldots,\beta^{N})$
denotes an $N$-dimensional Brownian motion on a filtered (normal)
probability space $\left(\Omega,\mathcal{F},\left\{ \mathcal{F}_{t}\right\} _{t\ge0},\mathbb{P}\right)$.
The initial datum is chosen as $x\in L^{2}(\Ocal)$, or, more generally,
as $x\in L^{2}(\Omega,\Fcal_{0},\P;L^{2}(\Ocal))$. Here, $\nu$ denotes
the outer unit normal on $\partial\Ocal$. The multi-valued graph
$\xi\mapsto\sgn(\xi)$ from $\mathbb{R}^{d}$ into $2^{\mathbb{R}^{d}}$
is defined by 
\[
\sgn(\xi):=\left\{ \begin{array}{ll}
\dfrac{\xi}{\left\vert \xi\right\vert }, & \text{, if }\xi\neq0,\\
\left\{ \zeta\in\mathbb{R}^{d}\vert\left\vert \zeta\right\vert \leq1\right\}  & \text{, if }\xi=0,
\end{array}\right.
\]
for all $\xi\in\mathbb{R}^{d}$. Because of the multi-valued diffusivity
term, the equation becomes formally a \emph{stochastic evolution inclusion},
as have been studied e.g. in \cite{Kree82,GessToelle14,Gess:2015gw}.
We denote by $\left\vert \cdot\right\vert $ the Euclidean norm of
$\mathbb{R}^{d}$, and by $\langle\cdot,\cdot\rangle$ the Euclidean
scalar product of $\R^{d}$.

Set 
\begin{equation}
\mathbf{b}:=\begin{pmatrix}b_{1}\\
\vdots\\
b_{N}
\end{pmatrix}:\overline{\Ocal}\to\R^{N\times d},\label{eq:B_definition}
\end{equation}
and denote by $\mathbf{b}^{\ast}$ its transpose. We have that equation
\eqref{eq:main_equation} is formally equivalent to the Itô stochastic
partial differential equation, 
\begin{equation}
\left\{ \begin{aligned}dX_{t} & \in\div\left[\sgn\left(\nabla X_{t}\right)\right]dt+\dfrac{1}{2}\div[\mathbf{b}^{\ast}\mathbf{b}\nabla X_{t}]\,dt+\langle\mathbf{b}\nabla X_{t},d\beta_{t}\rangle, &  & \text{in\ensuremath{\;(0,T)\times\mathcal{{O}}},}\\
X_{0} & =x, &  & \text{in}\ensuremath{\;\mathcal{{O}}},\\
\dfrac{\partial X_{t}}{\partial\nu} & =0. &  & \text{on}\;(0,T)\times\mathcal{\partial{O}}.
\end{aligned}
\right.\label{eq:main_equation-1}
\end{equation}

A similar equation was studied in \cite{BBHT} for the case of a dissipative
drift, using the method of Brézis-Ekeland's variational principle\footnote{Equation \eqref{eq:main_equation-1} with Dirichlet boundary conditions
(instead of Neumann boundary conditions) is also being investigated
in \cite{RoecknerMunteanu2016}. However, the preprint of \cite{RoecknerMunteanu2016}
became publicly available after our revised work was submitted for
publication. We point out that the method used in \cite{RoecknerMunteanu2016}
is different from ours.}. On the other hand, equations with singular drift of the same form
have been studied in \cite{singular1,VBMR,BR15,Kim:2012en} for additive
and multiplicative bounded noise, respectively. See \cite{Krause:2014ie}
for a multiplicative Stratonovich stochastic equation with a similar
drift term. Those results do not apply to our case since the noise
coefficient 
\begin{equation}
u\mapsto\langle\mathbf{b}\nabla u,\cdot\rangle\label{eq:noise_coeff_intro}
\end{equation}
is not bounded on the state space $L^{2}\left(\mathcal{O}\right)$.
In \cite{GessRoe2014}, existence and uniqueness as well as regularity
have been investigated for the stochastic mean curvature flow with
unbounded noise. The methods used are related to ours, however, the
structure of the equation prevents a direct application to our situation.

Additionally, in our main Theorem \ref{thm:main_thm}, we will derive
existence and uniqueness results also for the singular $p$-Laplace
equations with $p\in(1,2)$, 
\begin{equation}
\left\{ \begin{aligned}dX_{t} & =\div\left[|\nabla X_{t}|^{p-2}\nabla X_{t}\right]dt+\sum_{i=1}^{N}\langle b_{i},\nabla X_{t}\rangle\,\circ d\beta_{t}^{i}, &  & \text{in\ensuremath{\;(0,T)\times\mathcal{{O}}},}\\
X_{0} & =x, &  & \text{in}\ensuremath{\;\mathcal{{O}}},\\
\dfrac{\partial X_{t}}{\partial\nu} & =0. &  & \text{on}\;(0,T)\times\mathcal{\partial{O}}.
\end{aligned}
\right.\label{eq:main_equation-1-1}
\end{equation}

Due to the lack of strong coercivity of the drift operator, we shall
employ so-called stochastic variational inequalities (SVI), with the
aim to construct solutions to \eqref{eq:main_equation-1-1} in a weak
variational sense. Even for bounded noise, singular equations of the
above type are generally not known to satisfy an Itô integral equation
--- not even in the (analytically) weak sense. Compare with \cite{singular1,VBMR,GessRoe2014,GessToelle15}
for related works employing SVI-frameworks. Using a rough path approach,
equations with similar noise were studied in \cite{CFO11,Friz:2015}.
A similar equation with linear drift is investigated in \cite{Beck:2014}.
We would like to point out, that the solutions of the work at hand
are strong solutions in the probabilistic sense, meaning, in particular,
that the solutions are functions of the given Brownian motion.

The natural energy space for the (Neumann) total variation flow, the
$p$-Laplace, respectively, would be $BV(\Ocal)$, the space of bounded
variation functions, respectively, the Sobolev space $W^{1,p}(\Ocal)$.
However, on the level of approximations, we shall work on the smaller
space $H^{1}(\Ocal)$. One reason is, that we are using viscosity
approximations, namely, we are adding a regularization term $\varepsilon\Delta$,
and taking $\varepsilon\searrow0$. In particular, this allows us
to consider the gradient-type SPDE for the borderline case of a monotone
drift operator ($p=1$) which cannot be treated within the scope of
\emph{reflexive} Gelfand triples ($p>1$), as e.g. has been done in
\cite{BBHT,BR15} for Dirichlet boundary conditions.

Another property, necessary for our arguments, is the mutual commutation
behavior of the diffusion coefficients, as well as the question of
commutation with the Neumann Laplace --- in order to obtain these,
we introduce a condition from differential geometry, similar to the
notion of Killing vector fields, see Assumption \ref{assu:well} and
Appendix \ref{sec:Vector-fields} below. In this context, we prove
that, under our assumptions, the first-order partial differential
operator \eqref{eq:noise_coeff_intro}, which corresponds to an infinitesimal
vector field action, preserves Neumann boundary conditions, see Lemma
\ref{lem:Neumann-invariance} below. 

According to \cite{brain}, the interest in studying this type of
equation comes from its use for simulations in the tomographic reconstruction
problem, which has several applications, for instance in medical imaging
and general image processing.

More precisely, the binary tomography methods are proposed in \cite{kuba}
as a simpler inverse problem of reconstruction. Being still an ill-posed
problem, it needs to be regularized, and this may be done for instance
with the total variation (T.V.) regularization. In order to numerically
solve the problem, a fast and efficient T.V./$L^{2}$ minimization
algorithm based on the ``Alternate Direction of Minimization Method''
(A.D.M.M.) has been proposed in \cite{afonso,wang-yang}. Finally,
a singular stochastic diffusion equation with gradient dependent noise
is used to refine the solution obtained by the A.D.M.M. algorithm,
see also the related Example \ref{exa:2D} below. The time dependent
(deterministic) T.V. image restoration problem has been studied e.g.
in \cite{Bogelein:2015kg}. We refer to \cite{Hakula:2014} and the
references therein for a stationary stochastic approach.

Therefore, the present work gives rigorous theory to support the use
of this kind of equation for numerical results such that those in
\cite{brain}. However, the authors of \cite{brain} are posing the
problem for an Itô-equation instead of a Stratonovich one, see also
\cite{Wang:2014gn}.

Another possible interest of studying stochastic differential equations
perturbed by this type of noise comes from the applications in modes\ of
turbulence (see \cite{turbulence}).

\subsection*{Discussion of an approach via transformation}

Following the classical works \cite{Doss76,Sussmann78}, we can also
think of an alternative access to our equation, which, however, must
fail even on a heuristic level. Here, we shall briefly discuss this
approach and point out the difficulties.

Let $y\in L^{2}(\Ocal)$ and consider the following deterministic
PDE 
\[
dY_{t}(\xi)\in\div[\operatorname{sgn}(\nabla Y_{t}(\xi))]\,dt,\quad Y_{0}(\xi)=y(\xi),\quad t\in(0,T],\;\xi\in\Ocal,
\]
where we impose Neumann boundary conditions. For initial datum $y\in H^{1}(\Ocal)$,
a unique weak solution in the Gelfand triple $H^{1}\subset L^{2}\subset H^{-1}$
was constructed in \cite[Theorem 2.6]{GessToelle14}. For initial
conditions $y\in L^{2}(\Ocal)$, see \cite{Andreu:2001vn,andreu2012parabolic,deLeon:2015cs}.

Let $\mathbf{b}$ be as in \eqref{eq:B_definition}, and assume merely
that $b_{i}\in C^{1}(\overline{\Ocal};\R^{d})$ for $1\le i\le N$.
For $t\in[0,T]$, $\xi\in\Ocal$, $\omega\in\Omega$, define
\begin{equation}
X_{t}(\xi)(\omega):=Y_{t}(\xi+\mathbf{b}^{\ast}(\xi)\beta_{t}(\omega)),\quad X_{0}(\xi)=y(\xi).\label{eq:transform}
\end{equation}
A similar transformation approach can be found in \cite{Beck:2014}
for linear equations and in \cite{Friz:2015} for the case of conservation
laws. See also \cite{BenthDeckPotthoffStreit:1998} for other nonlinear
SPDEs treated by this transformation.

Assume for a while, that we have a pathwise Itô formula available
(that is, for $\omega\in\Omega$, fixed), ignoring the lack of regularity
of $(x,t)\mapsto Y_{t}(\xi+\mathbf{b}^{\ast}(\xi)x)=:F(x,t)$ for
a moment:
\[
F(\beta_{t},t)=F(0,0)+\sum_{i=1}^{N}\int_{0}^{t}\partial_{x_{i}}F(\beta_{s},s)\,\circ d\beta_{s}^{i}+\int_{0}^{t}\partial_{t}F(\beta_{s},s)\,ds,
\]
see \cite{Protter:1995be,Bardina:1997fi,eisenbaum:2001,Bardina:2007kb}.
By the chain rule, we would obtain that for $dt$-a.e. $t\in[0,T]$,
possibly outside an exceptional subset of $\Ocal$, 
\[
X_{t}\in y+\int_{0}^{t}\div[\operatorname{sgn}(\nabla X_{s})]\,ds+\sum_{i=1}^{N}\sum_{j=1}^{d}\int_{0}^{t}b_{i}^{j}\partial_{\xi_{j}}X_{s}\,\circ d\beta_{s}^{i},
\]
which is a pathwise representation of equation \eqref{eq:main_equation}.
The Stratonovich correction term is formally given by
\[
\frac{{1}}{2}[\nabla F(\beta,\cdot),\beta]_{t}=\frac{1}{2}\sum_{k=1}^{d}\sum_{i=1}^{N}\int_{0}^{t}\partial_{\xi_{k}}(\langle b_{i},\nabla X_{s}\rangle)\,ds=\frac{1}{2}\int_{0}^{t}\div[\mathbf{b}^{\ast}\mathbf{b}\nabla X_{s}]\,ds,
\]
where $t\mapsto[\cdot,\cdot]_{t}$ denotes the quadratic covariation
process, compare with \cite{Protter:1995be,eisenbaum:2001}.

Even if one finds a way to deal with the measurability issues, the
direct application of this approach must fail due to the lack of regularity,
since, according to \cite{Yazhe:1986ky,Yazhe:1992do,DiBe}, good Hölder
estimates for the solution (and for the gradient of the solution)
to the parabolic $p$-Laplace equation usually hold only if $p>\frac{2d}{d+2}$,
thus sorting out the total variation flow.

\subsection*{Organization of the paper}

After a brief part on notational conventions of this work, we shall
give our assumptions and discuss the resulting properties of the noise
coefficient operators \eqref{eq:noise_coeff_intro} in Section \ref{sec:Hypo_commutation}
--- in particular, we establish the commutation relations which we
shall need subsequently. Our notion of SVI-solutions (to equations
with gradient-type multiplicative Stratonovich noise) is provided
in Section \ref{sec:SVI}. In Section \ref{sec:Existence-and-uniqueness},
we shall first derive a useful a priori estimate in $H^{2}(\Ocal)$
and after that go through several approximation steps necessary for
proving the existence of a solution. The uniqueness of SVI solutions
is proved in Subsection \ref{sub:Uniqueness}. For the reader's convenience,
we shall provide some results on Killing vector fields in the appendix.

\subsection*{Notation}

We shall recall a few standard definitions and fix notation which
will be used later.

We set $V:=H^{1}(\Ocal)=W^{1,2}(\Ocal)$, the standard first order
square integrable Sobolev space and $H:=L^{2}(\Ocal)$, the Hilbert
space of (classes of) square integrable functions with respect to
the Lebesgue measure. We also consider the second order square integrable
Sobolev space $H^{2}(\Ocal)=W^{2,2}(\Ocal)$. We shall write $H^{2}$,
$H^{1}$, $L^{2}$, and so on, if the context is clear. We denote
the inner product in $H$ by $(\cdot,\cdot)_{H}$. $V^{\ast}$ denotes
the topological dual of $V$ with dualization denoted by $\langle\cdot,\cdot\rangle$.
Let $W^{1,p}(\Ocal)$ be the usual first order $p$-integrable Sobolev
space. For $u\in L^{1}(\Ocal)$ we define the total variation semi-norm
by 
\[
\|u\|_{TV}:=\sup\left\{ \int_{\Ocal}u\div\eta\,d\xi\,\bigg\vert\,\eta\in C_{0}^{\infty}(\Ocal;\R^{d}),\,\|\eta\|_{L^{\infty}(\Ocal;\R^{d})}\le1\right\} 
\]
and let $BV$ be the space of functions of bounded variation, that
is, 
\[
BV(\Ocal):=\{u\in L^{1}(\Ocal)\;\vert\;\|u\|_{TV}<\infty\}.
\]

For a proper, convex, lower semi-continuous (l.s.c.) function $\Phi:H\to[0,+\infty]$,
we denote the subdifferential by $\partial\Phi$. The graph of $\partial\Phi$
consists precisely of the pairs of elements $(x,y)\in\partial\Phi\subseteq H\times H$
that satisfy $(y,z-x)_{H}\le\Phi(z)-\Phi(x)$ for all $z\in H$. In
this context, we may also write $y\in\partial\Phi(x)$, where we identify
the subdifferential as a multi-valued map $\partial\Phi:H\to2^{H}$.

We say that a function $X\in L^{1}([0,T]\times\Omega;H)$ is $\{\Fcal_{t}\}$-progressively
measurable if $X1_{[0,t]}$ is $\Bcal([0,t])\otimes\Fcal_{t}$-measurable
for all $t\in[0,T]$. The domain of (unbounded) linear operators $A$
is denoted by $\operatorname{dom}(A)$, and by the same notation,
we denote the effective domain of convex functionals or multi-valued
graphs. By $C$, we denote a positive constant that may change its
value from line to line.

\section{\label{sec:Hypo_commutation}Hypotheses and commutation relation}

Suppose that $\Ocal\subset\R^{d}$ is a sufficiently smooth, open,
bounded domain. Denote the surface element on $\partial\Ocal$ by
$S^{d-1}$. Denote by $\nu$ the outer unit normal on $\partial\Ocal$.

Below, we collect our assumptions on the ``diffusion matrix'' $\mathbf{b}$
and prove some essential properties of the associated partial differential
operators. Briefly summarized, we are assuming conditions to ensure
that
\begin{itemize}
\item the first-order partial differential operators associated to the rows
of $\mathbf{b}$ are well-defined unbounded skew-symmetric linear
operators on $L^{2}(\Ocal)$, see Assumption \ref{assu:well} (i)
and Lemma \ref{lem:B-properties} below;
\item the groups of diffeomorphisms generated by the rows of $\mathbf{b}$
mutually commute, see Assumption \ref{assu:well} (ii) and Lemma \ref{lem:lie-bracket}
below;
\item the partial differential operators associated to the rows of $\mathbf{b}$
leave the domain of the Neumann Laplace invariant and commute with
its resolvent, see Assumption \ref{assu:well} (i), (iii), (iv) and
Lemmas \ref{lem:Neumann-invariance} and \ref{lem:shigekawa} below.
\end{itemize}
We note that the commutation assumptions are typical for gradient-type
noise, even for linear stochastic equations, see \cite{DaPrato:1982wn,DaPrato:1982ci}
and \cite[Section 6.5]{DPZ92}.
\begin{assumption}
\label{assu:well} Suppose that the diffusion coefficients $b_{i}\in C^{2}(\overline{\Ocal};\R^{d})$,
$1\le i\le N$, satisfy the following conditions:
\begin{enumerate}
\item $\langle b_{i},\nu\rangle=0$ on $\partial\Ocal$ for all $1\le i\le N$.
\item Either $N=1$, or $b_{i}^{k}\partial_{k}b_{l}^{j}=b_{l}^{k}\partial_{k}b_{i}^{j}$
on $\overline{\Ocal}$ for all $1\le k,j\le d$, $1\le i,l\le N$,
$i\not=l$.
\item $\div b_{i}=0$ and $\langle\Delta b_{i},b_{i}\rangle=0$ on $\overline{\Ocal}$
for all $1\le i\le N$ (where the Laplace operator acts componentwise).
\item $\langle\langle\nabla b_{i},\nu\rangle,b_{i}\rangle+\langle\langle\nabla b_{i},b_{i}\rangle,\nu\rangle=0$
on $\partial\Ocal$ for all $1\le i\le N$ (where the gradient acts
componentwise).
\end{enumerate}
\end{assumption}

By Lemma \ref{lem:killing_lemma} in the appendix, sufficiently smooth
vector fields $b_{i}$ that satisfy (iii) and (iv) above, are precisely
the so-called \emph{Killing vector fields}, see \eqref{eq:kill_basic}
in the appendix for the definition.
\begin{rem}
Condition (i) and (iii) in the above Assumption sort out any examples
with stochastic perturbation for the case $d=1$. Indeed, let $\Ocal=I$
be a bounded interval, so that clearly $\nu=\pm1$ at the endpoints
of $I$. One the one hand, condition (i) implies that $b=0$ on $\partial I$.
On the other hand, $\div b=b^{\prime}=0$ implies that $b$ must be
constant on $\overline{I}$. Hence $b\equiv0$.

Altogether, condition (i) ensures that the noise coefficients respect
Neumann boundary conditions\footnote{If $b\equiv1$ on $\Ocal=I=(0,2\pi)$, then $\xi\mapsto\cos\xi$ has
Neumann boundary conditions, however, $b\cdot(\cos\xi)^{\prime}$
does not.}, see Lemma \ref{lem:Neumann-invariance} below.
\end{rem}

\begin{example}
\label{exa:2D} Let $N=1$, $d=2$. Let $\Ocal=\{\zeta\in\R^{2}\;\vert\;|\zeta|<R\}$,
$R>0$. Let $b(\xi):=(\xi_{2},-\xi_{1})$. Then \eqref{eq:main_equation}
becomes
\[
\left\{ \begin{aligned}dX_{t} & \in\div\left[\sgn\left(\nabla X_{t}\right)\right]dt+(\xi_{2}\partial_{1}X_{t}-\xi_{1}\partial_{2}X_{t})\,\circ d\beta_{t}, &  & \text{in\ensuremath{\;(0,T)\times\mathcal{{O}}},}\\
X_{0} & =x, &  & \text{in}\ensuremath{\;\mathcal{{O}}},\\
\dfrac{\partial X_{t}}{\partial\nu} & =0, &  & \text{on}\;(0,T)\times\mathcal{\partial{O}}.
\end{aligned}
\right.
\]

\end{example}

\begin{example}
\label{exa:cross-prod-1} Let $N=1$, $d=3$. Let $\Ocal=\{\zeta\in\R^{3}\;\vert\;|\zeta|<R\}$,
$R>0$. Let $b(\xi):=(\xi_{3}-\xi_{2},\xi_{1}-\xi_{3},\xi_{2}-\xi_{1})$
and denote $\mathbf{1}:=(1,1,1)$ (clearly, $b(\xi)=\xi\times\mathbf{1}$).
Then $b$ is a Killing vector field and \eqref{eq:main_equation}
becomes
\[
\left\{ \begin{aligned}dX_{t} & \in\div\left[\sgn\left(\nabla X_{t}\right)\right]dt+\langle\xi\times\nabla X_{t},\mathbf{1}\rangle\,\circ d\beta_{t}, &  & \text{in\ensuremath{\;(0,T)\times\mathcal{{O}}},}\\
X_{0} & =x, &  & \text{in}\ensuremath{\;\mathcal{{O}}},\\
\dfrac{\partial X_{t}}{\partial\nu} & =0, &  & \text{on}\;(0,T)\times\mathcal{\partial{O}}.
\end{aligned}
\right.
\]
One can replace $\mathbf{1}$ by any constant vector $\zeta_{0}\in\R^{3}\setminus\{0\}$
and get that $\tilde{b}(\xi):=\xi\times\zeta_{0}$ still satisfies
Assumption \ref{assu:well}.
\end{example}

\begin{rem}
Note that:
\begin{enumerate}
\item The above vector fields $\xi\mapsto(\xi_{2},-\xi_{1})$ and $\xi\mapsto\xi\times\zeta_{0}$
resp. are the infinitesimal generators of the rotation groups $SO(2)$
and $SO(3)$ resp., see e.g. \cite{hall2015lie}. They generate groups
of rotations around the origin, leaving balls centered at the origin
invariant, which explains why the respective domains are chosen as
above ($\zeta_{0}$ spans the axis of rotation).
\item Let $d=N$. The example of constant vector fields $b_{i}^{j}=\delta_{i,j}$
are precisely the infinitesimal generators of groups of translations
(violating Assumption \ref{assu:well} (i) on balls). The $d$-torus
$\mathbb{T}^{d}$ leaves the translation groups invariant, and is
still a bounded, convex domain, leading either to periodic boundary
conditions or to a setting for compact manifolds without boundary.
\end{enumerate}
\end{rem}
Recall that the domain $\operatorname{dom}(-\Delta)$ of the Neumann
Laplace in the weak sense is given by all elements $u\in H^{1}(\Ocal)$
such that $\Delta u\in L^{2}(\Ocal)$ and such that
\[
\int_{\Ocal}v\Delta u\,d\xi=-\int_{\Ocal}\langle\nabla v,\nabla u\rangle\,d\xi\quad\forall v\in H^{1}(\Ocal).
\]
For $u\in\operatorname{dom}(-\Delta)$, the normal derivative $\frac{\partial u}{\partial\nu}$
belongs to $H^{-1/2}(\partial\Ocal)$ (being the dual of the\emph{
}space of traces $H^{1/2}(\partial\Ocal)$) and is zero, see e.g.
\cite[p. 250]{Demengel:2012dp} for details. As we assume smooth boundary,
the normal derivative is given by $\frac{\partial u}{\partial\nu}=\langle\nabla u,\nu\rangle$
$S^{d-1}$-a.e., whenever $u\in C^{2}(\overline{\Ocal})$. Hence,
\begin{equation}
\Ccal:=\{u\in C^{2}(\overline{\Ocal})\;\vert\;\langle\nabla u,\nu\rangle=0\;S^{d-1}\text{-a.e.\ensuremath{\}}}\label{eq:core_defi}
\end{equation}
 is a \emph{core} for the Neumann Laplace, that is, dense in $\operatorname{dom}(-\Delta)$
w.r.t. to the graph norm
\[
\|u\|_{\operatorname{dom}(-\Delta)}^{2}:=\int_{\Ocal}(|\Delta u|^{2}+|u|^{2})\,d\xi.
\]
On the domain $H^{1}(\Ocal)$, we define the linear operators $B_{i}$,
$1\le i\le N$ as 
\[
B_{i}:H^{1}(\mathcal{O})\rightarrow L^{2}(\mathcal{O})
\]
\begin{eqnarray*}
B_{i}(u)(\xi) & := & \langle b_{i}(\xi),\nabla u(\xi)\rangle\\
 & = & \div[b_{i}(\xi)u(\xi)],\quad\forall u\in H^{1}\left(\mathcal{O}\right),
\end{eqnarray*}
where $b_{i}$ satisfies Assumption \ref{assu:well}.
\begin{lem}
\label{lem:B-properties} Assume Assumption \ref{assu:well}. Let
us collect the following properties:
\begin{enumerate}
\item The space $H^{1}(\Ocal)$ is the domain of skew-adjointness of $B_{i}$,
$1\le i\le N$, that is
\[
B_{i}u=-B_{i}^{\ast}u,\quad1\le i\le N,\quad u\in H^{1}(\Ocal),
\]
where $B_{i}^{\ast}$ denotes the adjoint operator in $L^{2}(\Ocal)$.
\item For all $u\in H^{1}(\Ocal)$, it holds that
\begin{equation}
\int_{\Ocal}uB_{i}u\,d\xi=0,\quad1\le i\le N.\label{eq:uBu}
\end{equation}

\end{enumerate}
\end{lem}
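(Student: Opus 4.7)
The plan is to derive both statements from a single integration by parts on $\Ocal$, using only Assumption \ref{assu:well} (i) and (iii). First I would note that since $\div b_i = 0$ on $\overline{\Ocal}$, the two expressions $\langle b_i, \nabla u\rangle$ and $\div(b_i u)$ agree a.e. for every $u \in H^1(\Ocal)$, and both lie in $L^2(\Ocal)$ because $b_i$ is bounded with bounded derivatives on $\overline{\Ocal}$. Writing $B_i$ in divergence form is what allows the smooth coefficient to be absorbed into a boundary flux under integration by parts.

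For the skew-adjointness identity in (i), I would fix $u,v \in H^1(\Ocal)$ and apply the Gauss--Green formula to the vector field $b_i u \in H^1(\Ocal;\R^d)$ paired with $v$:
\[
\int_\Ocal v\,\div(b_i u)\,d\xi \;=\; -\int_\Ocal u\,\langle b_i,\nabla v\rangle\,d\xi \;+\; \int_{\partial\Ocal} u v\,\langle b_i,\nu\rangle\,dS^{d-1}.
\]
Assumption \ref{assu:well} (i) asserts that $b_i$ is tangential to $\partial\Ocal$, which makes the boundary term vanish identically. Rewriting this as $(v, B_i u)_H = -(B_i v, u)_H$ for all $u, v \in H^1(\Ocal)$ shows that $H^1(\Ocal) \subset \operatorname{dom}(B_i^*)$ with $B_i^* u = -B_i u$, which is exactly the claim in (i).

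For (ii), I would then specialize $v = u$ in the identity just proved, which yields $(u, B_i u)_H = -(u, B_i u)_H$ and therefore $(u, B_i u)_H = 0$; equivalently, one may note directly that $2u\langle b_i,\nabla u\rangle = \langle b_i,\nabla(u^2)\rangle = \div(b_i u^2)$ and integrate over $\Ocal$. The only technical point I foresee is justifying the Gauss--Green identity at $H^1$-regularity: because $\Ocal$ has sufficiently smooth boundary, $b_i \in C^2(\overline{\Ocal};\R^d)$ and $u,v \in H^1(\Ocal)$, the product $b_i u$ lies in $H^1(\Ocal;\R^d)$ and the traces of $u$ and $v$ on $\partial\Ocal$ are well defined in $L^2(\partial\Ocal)$, so the formula extends by density from $C^\infty(\overline{\Ocal})$. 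I do not expect any real obstacle here; conditions (ii) and (iv) of Assumption \ref{assu:well} play no role in this lemma and will instead enter the subsequent commutation and Neumann-invariance results.
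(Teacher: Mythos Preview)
Your proposal is correct and follows essentially the same route as the paper: both arguments apply the Gauss--Green formula to $u,v\in H^{1}(\Ocal)$, use Assumption~\ref{assu:well}~(i) to kill the boundary integral and $\div b_i=0$ from (iii) to pass between $\langle b_i,\nabla u\rangle$ and $\div(b_i u)$, then obtain (ii) by setting $v=u$. Your added remarks on $H^{1}$-regularity justification and on the irrelevance of conditions (ii), (iv) are accurate and do not depart from the paper's argument.
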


\begin{proof}
Let $u\in H^{1}(\Ocal)$ and fix $1\le i\le N$.
\begin{enumerate}
\item [(i):] By the Gauss-Green theorem, for $v\in H^{1}(\Ocal)$, taking
Assumption \ref{assu:well} (i) into account,
\[
\begin{aligned}(B_{i}^{\ast}u,v)_{L^{2}(\Ocal)} & =(u,B_{i}v)_{L^{2}(\Ocal)}\\
 & =\int_{\Ocal}u\langle b_{i},\nabla v\rangle\,d\xi\\
 & =-\int_{\Ocal}\div(b_{i}u)v\,d\xi+\int_{\partial\Ocal}uv\langle b_{i},\nu\rangle\,dS^{d-1}\\
 & =-\int_{\Ocal}\langle b_{i},\nabla u\rangle v\,d\xi\\
 & =-(B_{i}u,v)_{L^{2}(\Ocal)}.
\end{aligned}
\]
The density of $H^{1}(\Ocal)\subset L^{2}(\Ocal)$ yields (i).
\item [(ii):] This follows directly from (i).
\end{enumerate}
\end{proof}

For $1\le i\le N$ fixed, let $e^{tB_{i}}:L^{2}(\Ocal)\to L^{2}(\Ocal)$,
$t\in\R$, denote the $C_{0}$-group of linear operators associated
to $B_{i}$, such that, in particular,
\[
\frac{d}{dt}e^{tB_{i}}u\bigg\vert_{t=0}=B_{i}u,\quad u\in H^{1}(\Ocal).
\]

\begin{lem}
\label{lem:lie-bracket} The groups $e^{tB_{i}}$, $1\le i\le N$,
$t\in\R$ mutually commute, whenever Assumption \ref{assu:well} holds.\end{lem}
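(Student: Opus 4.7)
My plan relies on the observation that condition (ii) in Assumption \ref{assu:well} is precisely the pointwise vanishing of the Lie bracket of $b_{i}$ and $b_{l}$ on $\overline{\Ocal}$, from which I intend to deduce commutativity of the associated flows, and hence of the $C_{0}$-groups they generate.

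First, I would verify the infinitesimal commutation $B_{i}B_{l}u=B_{l}B_{i}u$ on the smooth core $\Ccal$ defined in \eqref{eq:core_defi} (or, even more generously, on $C^{2}(\overline{\Ocal})$). A direct computation using the product rule and the symmetry of mixed partials yields the cancellation of all second-order contributions, leaving
\[
(B_{i}B_{l}-B_{l}B_{i})u=\sum_{j,k}\bigl(b_{i}^{k}\partial_{k}b_{l}^{j}-b_{l}^{k}\partial_{k}b_{i}^{j}\bigr)\partial_{j}u.
\]
By Assumption \ref{assu:well} (ii), the coefficient in parentheses vanishes identically on $\overline{\Ocal}$, so $B_{i}$ and $B_{l}$ commute on $\Ccal$. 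The same coefficient is precisely the $j$-th component of the Lie bracket $[b_{i},b_{l}]$ of the vector fields.

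Next, I would identify $e^{tB_{i}}$ with the pull-back under the flow of $b_{i}$. By Assumption \ref{assu:well} (i) the field $b_{i}$ is tangent to $\partial\Ocal$, and its $C^{2}$-regularity on the compact set $\overline{\Ocal}$ yields, via the standard Nagumo-type invariance criterion for ODEs, a global $C^{1}$-flow $\phi_{t}^{i}:\overline{\Ocal}\to\overline{\Ocal}$, $t\in\R$. Assumption \ref{assu:well} (iii) ($\div b_{i}=0$) together with Liouville's theorem then makes $\phi_{t}^{i}$ Lebesgue-measure-preserving, so $T_{t}^{i}u:=u\circ\phi_{t}^{i}$ defines a strongly continuous group of isometries on $L^{2}(\Ocal)$. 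Differentiating on $\Ccal$ shows that the generator of $(T_{t}^{i})_{t}$ coincides with $B_{i}$ on a dense subspace, and by uniqueness of the generator $T_{t}^{i}=e^{tB_{i}}$ for every $1\le i\le N$ and $t\in\R$.

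Finally, I would invoke the classical differential-geometric equivalence: two smooth vector fields on an open set have vanishing Lie bracket if and only if their flows commute there (and this extends continuously to $\overline{\Ocal}$). Since $[b_{i},b_{l}]\equiv0$ on $\overline{\Ocal}$ by the first step, $\phi_{t}^{i}\circ\phi_{s}^{l}=\phi_{s}^{l}\circ\phi_{t}^{i}$ for all $s,t\in\R$; composing with a test function $u$ gives $e^{tB_{i}}e^{sB_{l}}u=e^{sB_{l}}e^{tB_{i}}u$ in $L^{2}(\Ocal)$. The main delicate point I anticipate is the global invariance of $\overline{\Ocal}$ under each $\phi_{t}^{i}$ and the clean identification of $T_{t}^{i}$ with $e^{tB_{i}}$; Assumption \ref{assu:well} (i) supplies exactly the boundary tangency needed. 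A purely operator-theoretic alternative (promoting commutation on a common core to commutation of the $C_{0}$-groups via a Nelson/Trotter-type argument) is also available, but it would still require producing a common invariant core, so the flow-based route appears more transparent.
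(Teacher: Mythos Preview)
Your proposal is correct and follows essentially the same route as the paper: compute the Lie bracket $[b_{i},b_{l}]$ on smooth functions, observe that Assumption~\ref{assu:well}~(ii) makes it vanish, identify $e^{tB_{i}}$ with the pull-back by the flow $\phi_{t}^{i}$ of $b_{i}$ on $\overline{\Ocal}$, and then invoke the classical equivalence ``vanishing Lie bracket $\Leftrightarrow$ commuting flows'' (the paper cites \cite[Ch.~I.2, Exercise~3]{sakai1996riemannian}). Your write-up is in fact more explicit than the paper's on two points the paper leaves implicit: the use of the boundary tangency (i) together with a Nagumo-type criterion to guarantee that $\overline{\Ocal}$ is invariant under the flow, and the use of $\div b_{i}=0$ via Liouville's theorem to ensure that $u\mapsto u\circ\phi_{t}^{i}$ is an isometry on $L^{2}(\Ocal)$ whose generator coincides with $B_{i}$.
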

\begin{proof}
For $N=1$, there is nothing to prove. Let $b_{i}$, $b_{l}$, $i\not=l$
be as above. Let $f\in C^{2}(\overline{\Ocal})$. Define the commutator
$[b_{i},b_{l}]f:=B_{i}B_{l}f-B_{l}B_{i}f$. By Leibniz's rule,
\[
\begin{aligned}\left[b_{i},b_{l}\right]f & =\sum_{1\le k,j\le d}b_{i}^{k}\partial_{k}(b_{l}^{j}\partial_{j}f)-b_{l}^{k}\partial_{k}(b_{i}^{j}\partial_{j}f)\\
 & =\sum_{1\le k,j\le d}b_{i}^{k}b_{l}^{j}\partial_{k}\partial_{j}f+b_{i}^{k}\partial_{k}b_{l}^{j}\partial_{j}f-b_{l}^{k}b_{i}^{j}\partial_{k}\partial_{j}f-b_{l}^{k}\partial_{k}b_{i}^{j}\partial_{j}f\\
 & =\sum_{1\le k,j\le d}(b_{i}^{k}\partial_{k}b_{l}^{j}-b_{l}^{k}\partial_{k}b_{i}^{j})\partial_{j}f\\
 & =0,
\end{aligned}
\]
where we have used Assumption \ref{assu:well} (ii) in the last step.

Now, for any $1\le i\le N$, denote by $Z_{t}^{i}:\overline{\Ocal}\to\overline{\Ocal}$,
$t\in[0,\infty)$ the flow of diffeomorphisms on $\overline{\Ocal}$
corresponding to the vector field action of $b_{i}$, that is,
\[
\frac{d}{dt}Z_{t}^{i}=b_{i}(Z_{t}^{i}),\quad t\ge0,\quad Z_{0}^{i}=\xi\in\overline{\Ocal}.
\]
Then, since we have proved above that $[b_{i},b_{l}]=0$ for any $i\not=l$,
and from the property of the vector fields to be divergence free,
we get that the $Z^{i}$, $1\le i\le N$ mutually commute (in the
sense of composition of maps), see \cite[Ch. I.2, Exercise 3]{sakai1996riemannian}.
However, it holds that $(e^{tB_{i}}u)(\xi)=u(Z_{t}^{i}(\xi))$ and
one easily deduces that the groups of operators commute on $L^{2}$.
\end{proof}

\begin{lem}
\label{lem:Neumann-invariance} Suppose that Assumption \ref{assu:well}
holds. Let $u\in C^{3}(\overline{\mathcal{O}})$ be a scalar function
with $\langle\nabla u,\nu\rangle=0$ on $\partial\mathcal{O}$. Then
it holds that $\langle\nabla(\langle b_{i},\nabla u\rangle),\nu\rangle=0$
on $\partial\mathcal{O}$ for every $1\le i\le N$.\end{lem}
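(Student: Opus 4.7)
The plan is to compute $\langle \nabla(B_i u), \nu\rangle$ pointwise on $\partial\Ocal$ and show it vanishes. To make the computation clean, I extend $\nu$ to a tubular neighborhood of $\partial\Ocal$ by setting $\nu := \nabla \rho$, where $\rho$ is a smooth signed distance function for $\partial\Ocal$; since $|\nabla \rho|=1$ in this neighborhood, $\nu$ extends the outer unit normal, and, crucially, $(\partial_j \nu^k)_{j,k}$ equals the Hessian of $\rho$ and is therefore symmetric. (The quantity being evaluated at $p\in\partial\Ocal$ is extension-independent; the specific choice is merely for computational convenience.)

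Writing $B_i u = \sum_j b_i^j \partial_j u$ and differentiating yields
\[
\langle \nabla(B_i u), \nu\rangle = \underbrace{\sum_{j,k}\nu^k (\partial_k b_i^j)(\partial_j u)}_{=:A} \;+\; \underbrace{\sum_{j,k}\nu^k b_i^j \partial_k \partial_j u}_{=:B}.
\]
By Schwarz and Leibniz,
\[
B = \sum_j b_i^j \partial_j \langle \nabla u, \nu\rangle - \sum_{j,k} b_i^j (\partial_j \nu^k)(\partial_k u).
\]
Since $\langle \nabla u, \nu\rangle \equiv 0$ on $\partial\Ocal$ and $b_i$ is tangent to $\partial\Ocal$ by Assumption \ref{assu:well}(i), the first summand (a tangential derivative of a boundary-vanishing function) vanishes on $\partial\Ocal$.

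For $A$, Assumption \ref{assu:well}(iii)--(iv) combined with Lemma \ref{lem:killing_lemma} provides the Killing identity $\partial_k b_i^j + \partial_j b_i^k = 0$ on $\overline{\Ocal}$. Substituting $\partial_k b_i^j = -\partial_j b_i^k$ and applying Leibniz once more,
\[
A = -\sum_j (\partial_j u)\,\partial_j \langle b_i, \nu\rangle + \sum_{j,k} b_i^k (\partial_j u)(\partial_j \nu^k).
\]
Using Assumption \ref{assu:well}(i) and the Neumann condition on $u$ again, $\langle b_i, \nu\rangle \equiv 0$ on $\partial\Ocal$ and $\nabla u$ is tangent there, so the first summand vanishes on $\partial\Ocal$. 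Collecting $A$ and $B$ and relabelling dummy indices,
\[
\langle \nabla(B_i u), \nu\rangle = \sum_{j,k} b_i^k (\partial_j u)(\partial_j \nu^k - \partial_k \nu^j) \quad \text{on } \partial\Ocal,
\]
which is identically zero because the Hessian of $\rho$ is symmetric.

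The step I expect to be the most delicate is the careful bookkeeping that isolates precisely the skew-symmetric part of $\nabla \nu$; after applying the Killing identity and the Neumann condition for $u$, all first-order terms cancel and only this antisymmetric quantity remains, which is then killed by choosing $\nu$ as a gradient. A more conceptual alternative is to observe that since $b_i$ is Killing and tangent to $\partial\Ocal$, the flow $Z_t^i$ is a Euclidean rigid motion preserving $\overline{\Ocal}$ and $\partial\Ocal$; as such, it maps outward unit normals to outward unit normals, i.e.\ $(dZ_t^i)\nu = \nu \circ Z_t^i$ on $\partial\Ocal$, and differentiating at $t=0$ produces $[\nu, b_i] \equiv 0$ on $\partial\Ocal$, whence $\langle [\nu, b_i], \nabla u\rangle = 0$ since $\nabla u$ is tangent.
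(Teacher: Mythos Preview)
Your proof is correct and takes a genuinely different route from the paper's. The paper argues weakly: it pairs the boundary quantity against an arbitrary test function $\eta\in C^{2}(\overline{\Ocal})$, rewrites the boundary integral as $\int_{\Ocal}\div(\eta\nabla(\langle b,\nabla u\rangle))\,d\xi$ via Gauss, then invokes the commutation $\Delta(\langle b,\nabla u\rangle)=\langle b,\nabla\Delta u\rangle$ (Theorem~\ref{thm:commu}) together with several integrations by parts using $\langle b,\nu\rangle=0$, $\div b=0$, the Neumann condition on $u$, and the skew-symmetry of $Db$, until all terms cancel. Your argument is pointwise: extending $\nu=\nabla\rho$ so that $D\nu$ is symmetric, you split $\langle\nabla(B_{i}u),\nu\rangle$ into two pieces, each of which --- after one application of Leibniz and the Killing identity $\partial_{k}b_{i}^{j}=-\partial_{j}b_{i}^{k}$ --- produces a tangential derivative of a boundary-vanishing function plus a remainder involving $D\nu$; the remainders combine to the skew part of $D\nu$, which is zero by construction. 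Your approach is shorter, avoids any appeal to the Laplace commutator, and makes the geometric content transparent (the ``conceptual alternative'' you sketch is essentially the same observation phrased via the flow). The paper's approach has the mild advantage of not requiring an extension of $\nu$ and of reusing Theorem~\ref{thm:commu}, which is needed elsewhere anyway.
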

\begin{proof}
Fix $1\le i\le N$ and set $b:=b_{i}$. Let $\eta\in C^{2}(\mathcal{\overline{O}})$
be a test-function. We claim that for any $u\in C^{3}(\overline{\mathcal{O}})$
with $\langle\nabla u,\nu\rangle=0$ on $\partial\mathcal{O}$, it
holds that 
\begin{equation}
\int_{\partial\mathcal{O}}\eta\langle\nabla(\langle b,\nabla u\rangle),\nu\rangle\,dS^{d-1}=0\quad\forall\eta\in C^{2}(\overline{\mathcal{O}}).\label{eq:boundaryzero}
\end{equation}
In order to prove \eqref{eq:boundaryzero}, we first apply Gauss's
divergence theorem to the vector field $F:=\eta\nabla(\langle b,\nabla u\rangle),$
and get that
\[
\int_{\partial\mathcal{O}}\langle\eta\nabla(\langle b,\nabla u\rangle),\nu\rangle\,dS^{d-1}=\int_{\mathcal{O}}\div F\,d\xi.
\]
However, $\text{div \ensuremath{F}}=\langle\nabla\eta,\nabla(\langle b,\nabla u\rangle)\rangle+\eta\Delta(\langle b,\nabla u\rangle)$.
Let us begin with investigating the second term. By the Killing assumption,
we have the commutation on sufficiently smooth functions (cf. Theorem
\ref{thm:commu} in the appendix), thus, $\eta\Delta(\langle b,\nabla u\rangle)=\eta\langle b,\nabla\Delta u\rangle=\langle\eta b,\nabla\Delta u\rangle$.
Integrating by parts, we get that
\[
\int_{\mathcal{O}}\langle\eta b,\nabla\Delta u\rangle\,d\xi=-\int_{\mathcal{O}}\div(\eta b)\Delta u\,d\xi+\int_{\partial\mathcal{O}}\eta\langle b,\nu\rangle\Delta u\,dS^{d-1}.
\]
The latter term is zero by $\langle b,\nu\rangle=0$. Also, since
both $\eta$ and $b$ are smooth up to the boundary, $\div(\eta b)\in H^{1}(\mathcal{O})$.
So we can use the Neumann boundary condition for $u$ to get that
\[
-\int_{\mathcal{O}}\div(\eta b)\Delta u\,d\xi=\int_{\mathcal{O}}\langle\nabla\div(\eta b),\nabla u\rangle\,d\xi.
\]
Clearly, as $\div b=0$ , we get that $\nabla\div(\eta b)=\nabla(\langle b,\nabla\eta\rangle)$.
Hence
\[
\int_{\mathcal{O}}\div F\,d\xi=\int_{\mathcal{\mathcal{O}}}[\langle\nabla\eta,\nabla(\langle b,\nabla u\rangle)\rangle+\langle\nabla(\langle b,\nabla\eta\rangle),\nabla u\rangle]\,d\xi,
\]
differentiating out this term yields
\[
\int_{\mathcal{O}}\div F\,d\xi=\int_{\mathcal{O}}[\langle(Db)\cdot\nabla u,\nabla\eta\rangle+\langle(D^{2}u)\cdot b,\nabla\eta\rangle+\langle(Db)\cdot\nabla\eta,\nabla u\rangle+\langle(D^{2}\eta)\cdot b,\nabla u\rangle]\,d\xi,
\]
where, $Db$ denotes the Jacobian of $b$ and $D^{2}$ denotes the
Hessian of a scalar function, ``$\cdot$'' denotes matrix multiplication.

However, $Db$ is skew-symmetric with respect to the Euclidean scalar
product due to the Killing assumption, see \eqref{eq:kill_basic}
in the appendix. Hence
\[
\langle(Db)\cdot\nabla u,\nabla\eta\rangle=-\langle(Db)\cdot\nabla\eta,\nabla u\rangle
\]
and the above term becomes
\[
\int_{\mathcal{O}}\div F\,d\xi=\int_{\mathcal{O}}[\langle(D^{2}u)\cdot b,\nabla\eta\rangle+\langle(D^{2}\eta)\cdot b,\nabla u\rangle]\,d\xi.
\]
With Einstein's summation convention, interchanging the order of differentiation,
\[
\int_{\mathcal{O}}[\partial_{i}\partial_{j}ub^{j}\partial_{i}\eta+\partial_{i}\partial_{j}\eta b^{j}\partial_{i}u]\,d\xi=\int_{\mathcal{O}}[\partial_{j}\partial_{i}ub^{j}\partial_{i}\eta+\partial_{i}\partial_{j}\eta b^{j}\partial_{i}u]\,d\xi.
\]
Integrating by parts in the first term yields
\[
\int_{\mathcal{O}}\partial_{j}\partial_{i}ub^{j}\partial_{i}\eta\,d\xi=-\int_{\mathcal{O}}\partial_{i}u\partial_{j}(b^{j}\partial_{i}\eta)\,d\xi+\int_{\partial\mathcal{O}}\partial_{i}ub^{j}\partial_{i}\eta\nu_{j}\,dS^{d-1}.
\]
Now, in the boundary integral term, we can separate the sums over
$j$ and $i$ resp. and get that this term becomes zero by $\langle b,\nu\rangle=0$.
Furthermore,
\[
-\partial_{i}u\partial_{j}(b^{j}\partial_{i}\eta)=-\partial_{i}u\partial_{j}b^{j}\partial_{i}\eta-\partial_{i}ub^{j}\partial_{j}\partial_{i}\eta=-\partial_{i}ub^{j}\partial_{j}\partial_{i}\eta,
\]
as we have that $\text{div \ensuremath{b}}=0$. Finally, the remaining
terms cancel, and we get that 
\[
0=\int_{\mathcal{O}}\div F\,d\xi=\int_{\partial\mathcal{O}}\eta\langle\nabla(\langle b,\nabla u\rangle),\nu\rangle\,dS^{d-1}\quad\forall\eta\in C^{2}(\overline{\mathcal{O}}).
\]

\end{proof}
We shall need the following commutation result. Denote the resolvent
of the Neumann Laplace by $J_{\delta}:=\left(\operatorname{Id}-\delta\Delta\right)^{-1}$,
$\delta>0$.
\begin{thm}[Shigekawa]
\label{thm:shigekawa}  Fix $1\le i\le N$. Suppose that there exists
a linear subspace $\Dcal\subset\operatorname{dom}(-\Delta)$ such
that the following conditions hold:
\begin{enumerate}
\item $\Delta(\Dcal)\subseteq\operatorname{dom}(B_{i})$,
\item $B_{i}(\Dcal)\subseteq\operatorname{dom}(-\Delta)$,
\item $\Dcal$ is a core (see \eqref{eq:core_defi} for the terminology)
for $(-\Delta,\operatorname{dom}(-\Delta))$,
\item $\operatorname{dom}(-\Delta)\subseteq\operatorname{dom}(B_{i})$ and
$\operatorname{dom}(-\Delta)\subseteq\operatorname{dom}(B_{i}^{\ast})$,
\item For any $u\in\Dcal$, it holds that
\[
B_{i}\Delta u=\Delta B_{i}u.
\]

\end{enumerate}
Then for all $\delta>0$, and every $u\in\operatorname{dom}(B_{i})$,
it holds that

\[
B_{i}J_{\delta}u=J_{\delta}B_{i}u.
\]
\end{thm}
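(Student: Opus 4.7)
The strategy is the standard two-step argument for commuting a resolvent with an unbounded operator: first verify $(\operatorname{Id}-\delta\Delta) B_i v = B_i (\operatorname{Id}-\delta\Delta) v$ for $v$ in the core $\Dcal$, and then transfer this to all of $\operatorname{dom}(B_i)$ by a duality argument using the density of $(\operatorname{Id}-\delta\Delta)(\Dcal)$ in $L^{2}(\Ocal)$. Throughout I will use that $B_i$ is skew-symmetric on $\operatorname{dom}(B_i)$ (Lemma \ref{lem:B-properties} together with condition (iv)), that $J_\delta$ and $-\Delta$ are self-adjoint on $L^{2}(\Ocal)$, and that $J_\delta(\operatorname{Id}-\delta\Delta)=\operatorname{Id}$ on $\operatorname{dom}(-\Delta)$.

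For the first step, fix $v \in \Dcal$. Condition (i) yields $\Delta v \in \operatorname{dom}(B_i)$ and condition (ii) yields $B_i v \in \operatorname{dom}(-\Delta)$, so that both $B_i \Delta v$ and $\Delta B_i v$ are well-defined elements of $L^{2}(\Ocal)$. Invoking (v) then gives
\[
(\operatorname{Id}-\delta\Delta) B_i v \;=\; B_i v - \delta \Delta B_i v \;=\; B_i v - \delta B_i \Delta v \;=\; B_i (\operatorname{Id}-\delta\Delta) v.
\]
For the second step, fix $u \in \operatorname{dom}(B_i)$ and an arbitrary $\psi \in \Dcal$, and set $w := (\operatorname{Id}-\delta\Delta)\psi$. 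Conditions (i) and (iv) imply that $\psi, \Delta\psi \in \operatorname{dom}(B_i)$, so $w \in \operatorname{dom}(B_i)$; (iv) also gives $J_\delta u \in \operatorname{dom}(-\Delta) \subseteq \operatorname{dom}(B_i)$. I will then chain the identities
\[
(B_i J_\delta u, w)_{L^{2}} = -(J_\delta u, B_i w)_{L^{2}} = -(J_\delta u, (\operatorname{Id}-\delta\Delta) B_i \psi)_{L^{2}} = -((\operatorname{Id}-\delta\Delta) J_\delta u, B_i \psi)_{L^{2}} = -(u, B_i \psi)_{L^{2}},
\]
where the first equality uses skew-adjointness of $B_i$, the second uses the core commutation from step one, and the third uses self-adjointness of $\operatorname{Id}-\delta\Delta$ together with $B_i \psi \in \operatorname{dom}(-\Delta)$ by (ii). In parallel, since $J_\delta B_i u \in \operatorname{dom}(-\Delta)$,
\[
(J_\delta B_i u, w)_{L^{2}} = ((\operatorname{Id}-\delta\Delta) J_\delta B_i u, \psi)_{L^{2}} = (B_i u, \psi)_{L^{2}} = -(u, B_i \psi)_{L^{2}}.
\]
Subtracting shows $B_i J_\delta u - J_\delta B_i u$ is orthogonal to every element of $(\operatorname{Id}-\delta\Delta)(\Dcal)$; since $\Dcal$ is a core for $-\Delta$ and $\operatorname{Id}-\delta\Delta:\operatorname{dom}(-\Delta)\to L^{2}(\Ocal)$ is a continuous bijection from the graph norm onto the $L^{2}$-norm, this image is dense in $L^{2}(\Ocal)$, and the claim follows.

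The main technical concern is the domain bookkeeping: every integration by parts and every use of self-adjointness requires both entries of the $L^{2}$-inner product to lie in the appropriate domain. The hypotheses (i), (ii), (iv) are exactly what is needed to guarantee this while alternately applying $B_i$ and $-\Delta$, and (v) is what turns the algebraic cancellation in step one into an identity in $L^{2}$. No other substantial obstacle appears.
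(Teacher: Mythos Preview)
Your argument is correct. The domain bookkeeping is carefully handled: in the first chain you need $J_\delta u, w \in \operatorname{dom}(B_i)$ (supplied by (iv) and (i)) for the skew-symmetry step, and $J_\delta u, B_i\psi \in \operatorname{dom}(-\Delta)$ (supplied by definition of $J_\delta$ and (ii)) for the self-adjointness step; in the second chain the analogous inclusions hold. The density of $(\operatorname{Id}-\delta\Delta)(\Dcal)$ in $L^2(\Ocal)$ is justified exactly as you say, via the continuity and surjectivity of $\operatorname{Id}-\delta\Delta$ from the graph norm to $L^2$ combined with the core property (iii).

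By way of comparison, the paper does not give its own proof of this statement at all: it simply cites \cite[Theorem 3.1 and Proposition 3.2]{Shigekawa:2000io}. Your proof is therefore more self-contained than the paper's treatment, and is in fact essentially the standard argument underlying Shigekawa's result, specialised here by exploiting the skew-symmetry of $B_i$ from Lemma~\ref{lem:B-properties}. One small remark: your invocation of skew-symmetry is a feature of the concrete operators $B_i$ in this paper rather than of the abstract hypotheses (i)--(v); the general Shigekawa argument would use the adjoint $B_i^\ast$ in place of $-B_i$, which is why condition (iv) mentions $\operatorname{dom}(B_i^\ast)$ separately. In the present setting this distinction is immaterial.
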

\begin{proof}
See \cite[Theorem 3.1 and Proposition 3.2]{Shigekawa:2000io}.
\end{proof}

\begin{lem}
\label{lem:shigekawa} Assumption \ref{assu:well} implies that all
of the conditions of Theorem \ref{thm:shigekawa} are satisfied for
all $1\le i\le N$.\end{lem}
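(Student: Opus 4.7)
The plan is to identify an appropriate core $\Dcal$ and then verify the five conditions of Theorem \ref{thm:shigekawa} in order, invoking the earlier lemmas of Section \ref{sec:Hypo_commutation} at each step. A natural choice that makes everything work is
\[
\Dcal := \{u\in C^{3}(\overline{\Ocal})\;\vert\;\langle\nabla u,\nu\rangle=0\text{ on }\partial\Ocal\},
\]
which is a subspace of the core $\Ccal$ from \eqref{eq:core_defi}. The clear advantage is that every $u\in\Dcal$ has enough classical regularity that $\Delta u$ and $\langle b_{i},\nabla u\rangle$ admit direct pointwise manipulations, while Lemma \ref{lem:Neumann-invariance} is immediately applicable.

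For condition (i), I would simply note that $u\in\Dcal\subset C^{3}(\overline{\Ocal})$ yields $\Delta u\in C^{1}(\overline{\Ocal})\subset H^{1}(\Ocal)=\operatorname{dom}(B_{i})$. Condition (ii) is exactly the content of Lemma \ref{lem:Neumann-invariance}: for $u\in\Dcal$ one has $B_{i}u=\langle b_{i},\nabla u\rangle\in C^{2}(\overline{\Ocal})$ (since $b_{i}\in C^{2}(\overline{\Ocal};\R^{d})$ by Assumption \ref{assu:well}), and the Neumann condition $\langle\nabla(B_{i}u),\nu\rangle=0$ on $\partial\Ocal$ that the lemma gives places $B_{i}u$ in $\Ccal\subset\operatorname{dom}(-\Delta)$. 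For condition (iv), I would combine Lemma \ref{lem:B-properties}(i) with the inclusion $\operatorname{dom}(-\Delta)\subset H^{1}(\Ocal)$: since $\operatorname{dom}(B_{i})=\operatorname{dom}(B_{i}^{\ast})=H^{1}(\Ocal)$, the containment is immediate. Condition (v) is the geometric heart of the matter and is handled by the Killing commutation $B_{i}\Delta=\Delta B_{i}$ on smooth functions, which is Theorem \ref{thm:commu} in the appendix; this is where Assumption \ref{assu:well}(iii) (and the more general Killing reformulation via (iv)) enters.

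The only step requiring a little care is condition (iii), the core property. The paper already records that $\Ccal$ is a core for the Neumann Laplace, so it suffices to approximate any $u\in\Ccal$ by elements of $\Dcal$ in the graph norm $\|\cdot\|_{\operatorname{dom}(-\Delta)}$. I would do this by the standard trick: extend $u$ across the smooth boundary preserving $C^{2}$-regularity, mollify to obtain a smooth (in particular $C^{3}$) approximating sequence $u_{\varepsilon}$ on $\overline{\Ocal}$ converging to $u$ in $C^{2}(\overline{\Ocal})$, and then correct for the Neumann boundary condition by subtracting an element of $C^{\infty}(\overline{\Ocal})$ whose normal derivative cancels the (uniformly small) defect $\langle\nabla u_{\varepsilon},\nu\rangle$; such a corrector can be built using a tubular neighbourhood of $\partial\Ocal$ and a cutoff function, exploiting the smoothness of $\partial\Ocal$. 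The resulting sequence lies in $\Dcal$ and converges to $u$ in the graph norm.

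The main obstacle is really only this density argument for (iii), and even that is routine given the smoothness of $\partial\Ocal$; the substantive content of the lemma lies in Lemmas \ref{lem:B-properties}, \ref{lem:Neumann-invariance}, and Theorem \ref{thm:commu}, which have already been proved, so the proof of Lemma \ref{lem:shigekawa} amounts to bookkeeping these results against Shigekawa's checklist.
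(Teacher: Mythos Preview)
Your proposal is correct and follows essentially the same route as the paper: choose a core of sufficiently regular Neumann functions and check Shigekawa's five conditions by invoking Lemma~\ref{lem:B-properties}, Lemma~\ref{lem:Neumann-invariance}, and Theorem~\ref{thm:commu}. The only cosmetic difference is that the paper takes $\Dcal=C^{\infty}(\overline{\Ocal})\cap\Ccal$ (and dismisses the core property as ``obvious''), whereas you take $\Dcal=C^{3}(\overline{\Ocal})\cap\Ccal$, which is tailored to the hypothesis of Lemma~\ref{lem:Neumann-invariance} and for which you supply an explicit density sketch.
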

\begin{proof}
Assume the conditions of Assumption \ref{assu:well}. Fix $1\le i\le N$.
First note that Lemma \ref{lem:B-properties} implies that $\operatorname{dom}(B_{i})=\operatorname{dom}(B_{i}^{\ast})=H^{1}(\Ocal)$.
Let $\Dcal:=C^{\infty}(\overline{\Ocal})\cap\Ccal$, where $\Ccal:=\{u\in C^{2}(\overline{\Ocal})\;\vert\;\langle\nabla u,\nu\rangle=0\;S^{d-1}\text{-a.e.\ensuremath{\}}}$.
Obviously, $\Dcal$ is a core for $\operatorname{dom}(-\Delta)$.
Hence (i), (iii), (iv) are clearly satisfied. (ii) follows from Lemma
\ref{lem:Neumann-invariance}. The commutation on smooth functions
(v) follows from Theorem \ref{thm:commu}, since $b_{i}$ is a Killing
field by Assumption \ref{assu:well} and Lemma \ref{lem:killing_lemma}.
\end{proof}
Let us also define $B_{i}^{2}:H^{1}(\Ocal)\to(H^{1}(\Ocal))^{\ast}$
by
\[
B_{i}^{2}u:=-B_{i}^{\ast}B_{i}u,\quad u\in H^{1}(\Ocal),\quad1\le i\le N.
\]
In the sense of Schwartz distributions, it holds that
\[
\sum_{i=1}^{N}B_{i}^{2}u=\div[\mathbf{b}^{\ast}\mathbf{b}\nabla u].
\]
Set $S:=H^{1}(\Ocal)$. We thus have a Gelfand triple
\[
S\subset H\subset S^{\ast}.
\]

\section{\label{sec:SVI}Stochastic variational inequalities (SVI)}

Let $\beta=(\beta^{1},\ldots,\beta^{N})$ be a $N$-dimensional Brownian
motion on a filtered (normal) probability space $\left(\Omega,\mathcal{F},\left\{ \mathcal{F}_{t}\right\} _{t\ge0},\mathbb{P}\right)$
(with expected value $\E[Y]=\int_{\Omega}Y\,d\P$, $Y\in L^{1}(\Omega)$).
We consider the following SPDE on $H=L^{2}(\Ocal)$, where $\Ocal\subset\R^{d}$
is a smooth, open, bounded domain such that $\Ocal$ or $\partial\Ocal$
is convex, 
\begin{equation}
\left\{ \begin{aligned}dX_{t} & \in\div\left[\Psi\left(\nabla X_{t}\right)\right]dt+\dfrac{1}{2}\sum_{i=1}^{N}B_{i}^{2}X_{t}\,dt+\sum_{i=1}^{N}B_{i}X_{t}\,d\beta_{t}^{i}, &  & \text{in\ensuremath{\;(0,T)\times\mathcal{{O}},}}\\
X_{0} & =x, &  & \text{in}\ensuremath{\;\mathcal{{O}}},\\
\dfrac{\partial X_{t}}{\partial\nu} & =0, &  & \text{on}\;(0,T)\times\mathcal{\partial{O}},
\end{aligned}
\right.\label{eq:main_equation-2}
\end{equation}
here, $\Psi:=\partial\varphi\subseteq\R^{d}\times\R^{d}$ is the subdifferential
of $\varphi:=\frac{{1}}{p}|\cdot|^{p}$ for $p\in[1,2)$, which is
multi-valued for $p=1$, i.e. $\partial(\xi\mapsto|\xi|)(\cdot)=\sgn(\cdot)$.
More precisely, after fixing $p\in[1,2)$, let 
\[
\tilde{\Phi}(u):=\begin{cases}
\int_{\Ocal}\varphi(\nabla u(\xi))\,d\xi, & \text{if }u\in H^{1},\\
+\infty, & \text{if \ensuremath{u\in L^{2}\setminus H^{1}.}}
\end{cases}
\]
$\tilde{\Phi}$ is a proper convex functional on $L^{2}$ but might
fail to be lower semi-continuous. Let us define 
\[
\Phi(u):=\operatorname{cl}\tilde{{\Phi}}(u):=\inf\left\{ \liminf_{n\to\infty}~\tilde{{\Phi}}(u_{n})\;\big\vert~u_{n}\rightarrow u\in L^{2}\left(\mathcal{O}\right)\;\text{{strongly}}\right\} ,
\]
the so-called \emph{lower semi-continuous envelope} of $\tilde{{\Phi}}$,
cf. \cite[Proposition 11.1.1]{ABM06}. The l.s.c. envelope is given
by, for $p\in(1,2)$, 
\[
\Phi(u):=\begin{cases}
\int_{\Ocal}\varphi(\nabla u)\,d\xi & \text{if }u\in W^{1,p}(\Ocal)\cap L^{2}(\Ocal),\\
+\infty & \text{if }u\in L^{2}(\Ocal)\setminus W^{1,p}(\Ocal),
\end{cases}
\]
and for $p=1$, 
\[
\Phi(u):=\begin{cases}
\|u\|_{TV} & \text{if }u\in BV(\Ocal)\cap L^{2}(\Ocal),\\
+\infty & \text{if }u\in L^{2}(\Ocal)\setminus BV(\Ocal).,
\end{cases}
\]
where we suppress the dependence on $p$ in the notation. Obviously,
$\Phi$ is convex and it is easy to see that $\Phi$ is lower semi-continuous
on $H$. Moreover, $\tilde{{\Phi}}$ is Gâteaux-differentiable in
$u$ with derivative given by 
\[
D\tilde{{\Phi}}(u)(v)=\int_{\Ocal}\langle\eta,\nabla v\rangle\,d\xi,
\]
with $\eta(\xi)\in\Psi(\nabla u(\xi))$ for a.e. $\xi\in\Ocal$. In
fact, $\Phi$ coincides with the lower semi-continuous hull of $\tilde{{\Phi}}$
on $H$, and we have for $u\in H^{1}$ that 
\[
\{-\div\eta\;\vert\;\,\eta\in H^{1}(\Ocal;\R^{d}),\,\eta\in\Psi(\nabla u),\,d\xi\text{-a.e.}\}\subseteq\partial\Phi(u).
\]
However, the full characterization of $\partial\Phi$ (already in
the space $L^{1}(\Ocal)$) is involved. We shall omit its precise
characterization and instead refer to \cite{ABM06}.

Equation \eqref{eq:main_equation-2} is then written in relaxed form
as 
\begin{equation}
\left\{ \begin{aligned}dX_{t} & \in-\partial\Phi(X_{t})\,dt+\dfrac{1}{2}\sum_{i=1}^{N}B_{i}^{2}X_{t}\,dt+\sum_{i=1}^{N}B_{i}X_{t}\,d\beta_{t}^{i}, &  & \text{\ensuremath{\;t\in(0,T),}}\\
X_{0} & =x. &  & \text{}
\end{aligned}
\right.\label{eq:abstract-equation}
\end{equation}
Motivated by \cite{singular1,VBMR}, let us define our notion of a
solution to \eqref{eq:abstract-equation}.

\begin{defn}
\label{def:SVI}Let $x\in L^{2}(\Omega,\Fcal_{0},\P;L^{2}(\Ocal))$,
$T>0$. An $\{\Fcal_{t}\}$-progressively measurable process $X\in L^{2}([0,T]\times\Omega;L^{2}(\Ocal))$
is called an \emph{SVI-solution} to \eqref{eq:abstract-equation}
if 
\begin{enumerate}
\item \emph{(Regularity)} 
\begin{equation}
\Phi(X)\in L^{1}([0,T]\times\Omega).\label{eq:reg}
\end{equation}

\item \emph{(Variational inequality)} For every $Z\in L^{2}([0,T]\times\Omega;H^{1}(\Ocal))$
such that there exist $Z_{0}\in L^{2}(\Omega,\Fcal_{0},\P;H^{1}(\Ocal))$,
$G\in L^{2}([0,T]\times\Omega;L^{2}(\Ocal))$, $\{\Fcal_{t}\}$-progressively
measurable, such that the following equality holds $L^{2}(\Ocal)$,
that is, 
\begin{equation}
Z_{t}=Z_{0}+\int_{0}^{t}G_{s}\,ds+\frac{{1}}{2}\sum_{i=1}^{N}\int_{0}^{t}B_{i}^{2}Z_{s}\,ds+\sum_{i=1}^{N}\int_{0}^{t}B_{i}Z_{s}\,d\beta_{s}^{i},\label{eq:test-eq}
\end{equation}
$\P$-a.s. for all $t\in[0,T]$, we have that the following variational
inequality holds true 
\begin{equation}
\begin{split}\frac{{1}}{2}\E\|X_{t}-Z_{t}\|_{L^{2}(\Ocal)}^{2} & +\E\int_{0}^{t}\Phi(X_{s})\,ds\\
 & \le\frac{{1}}{2}\E\|x-Z_{0}\|_{L^{2}(\Ocal)}^{2}+\E\int_{0}^{t}\Phi(Z_{s})\,ds\\
 & \qquad-\E\int_{0}^{t}(G_{s},X_{s}-Z_{s})_{L^{2}(\Ocal)}\,ds,
\end{split}
\label{eq:SVI}
\end{equation}
for almost all $t\in[0,T]$. 
\end{enumerate}
\end{defn}
Moreover, if $X\in L^{2}(\Omega;C([0,T];L^{2}(\Ocal)))$, we say that
$X$ is a \emph{(time-) continuous SVI solution} to \eqref{eq:abstract-equation}.

\begin{rem}
\label{rem:test-func-rem}Practically, the test-process $Z$ needs
to satisfy $Z\in L^{2}([0,T]\times\Omega;H^{2}(\Ocal))$, we shall
provide in \eqref{app1-1} below that a process $Z$ of the form \eqref{eq:test-eq}
in fact exists (see also \eqref{H2} below).
\end{rem}
Inequality \eqref{eq:SVI} is obtained by formally applying the Itô
formula for the square of the $H$-norm to the process 
\[
d(X-Z)=(-\partial\Phi(X)-G)\,dt+\frac{{1}}{2}\sum_{i=1}^{N}B_{i}^{2}(X-Z)\,dt+\sum_{i=1}^{N}B_{i}(X-Z)\,d\beta^{i},
\]
taking expectation and using the subdifferential property.

\section{\label{sec:Existence-and-uniqueness}Existence and uniqueness}

\subsection{Existence}
\begin{thm}
\label{thm:main_thm}Let $x\in L^{2}(\Omega,\Fcal_{0},\P;H)$. Then
there is a unique continuous SVI solution $X\in L^{2}(\Omega;C([0,T];H))$
to \eqref{eq:abstract-equation} in the sense of Definition \ref{def:SVI}.
For two SVI solutions $X,Y$ with initial conditions $x,y\in L^{2}(\Omega,\Fcal_{0},\P;H)$,
resp., we have 
\[
\operatorname{ess\,sup}\displaylimits_{t\in[0,T]}\E\|X_{t}-Y_{t}\|_{H}^{2}\le\E\|x-y\|_{H}^{2}.
\]

\end{thm}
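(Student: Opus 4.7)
The plan is to construct the SVI solution by a joint viscosity plus Moreau-Yosida approximation, pass to the limit in the variational inequality, and then extract uniqueness and the contraction estimate through a symmetric testing argument. For $\varepsilon,\lambda>0$ I would first solve
\begin{equation*}
dX_t^{\varepsilon,\lambda}=\varepsilon\Delta X_t^{\varepsilon,\lambda}\,dt-\nabla\Phi_\lambda(X_t^{\varepsilon,\lambda})\,dt+\tfrac{1}{2}\sum_{i=1}^{N}B_i^{2}X_t^{\varepsilon,\lambda}\,dt+\sum_{i=1}^{N}B_iX_t^{\varepsilon,\lambda}\,d\beta_t^{i},
\end{equation*}
with Neumann boundary conditions, where $\Phi_\lambda$ is the Moreau-Yosida regularization of $\Phi$ (smooth with Lipschitz gradient). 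The additional viscosity yields coercivity on the reflexive Gelfand triple $V\subset H\subset V^{*}$, so existence and uniqueness of a variational solution follows from standard monotone-operator theory after smoothing the initial datum in $V$.

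The analytic core is a set of uniform a priori estimates. Applying It\^o's formula to $\|X_t^{\varepsilon,\lambda}\|_H^{2}$ and using the skew-adjointness $B_i^{*}=-B_i$ from Lemma \ref{lem:B-properties}, the Stratonovich correction and the It\^o quadratic variation of the noise cancel exactly,
\begin{equation*}
(X,B_i^{2}X)_H+\|B_iX\|_H^{2}=0,
\end{equation*}
giving the uniform energy bound
\begin{equation*}
\E\|X_t^{\varepsilon,\lambda}\|_H^{2}+2\varepsilon\,\E\int_0^{t}\|\nabla X_s^{\varepsilon,\lambda}\|_H^{2}\,ds+2\,\E\int_0^{t}\Phi_\lambda(X_s^{\varepsilon,\lambda})\,ds\le\E\|x\|_H^{2}.
\end{equation*}
The $H^{2}$ estimate announced in the organization I would obtain by repeating this computation after composing with the Neumann resolvent $J_\delta$, exploiting the commutation $B_iJ_\delta=J_\delta B_i$ provided by Theorem \ref{thm:shigekawa} together with Lemma \ref{lem:shigekawa}, and passing $\delta\searrow 0$.

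Passage to the limit and identification as an SVI solution proceeds by testing the approximate equation against an arbitrary $Z$ of the form \eqref{eq:test-eq}. It\^o's formula applied to $\tfrac{1}{2}\|X^{\varepsilon,\lambda}-Z\|_H^{2}$ again enjoys the Stratonovich cancellation; the subdifferential contribution is dominated by the convexity inequality $-(\nabla\Phi_\lambda(X^{\varepsilon,\lambda}),X^{\varepsilon,\lambda}-Z)_H\le\Phi_\lambda(Z)-\Phi_\lambda(X^{\varepsilon,\lambda})$, and the viscosity term is $O(\varepsilon)$ after integration by parts. Weak compactness in $L^{2}([0,T]\times\Omega;H)$ extracts a limit $X$; lower semi-continuity of $\Phi$ together with the monotone convergence $\Phi_\lambda\nearrow\Phi$ yields \eqref{eq:SVI} upon letting first $\lambda\searrow 0$ and then $\varepsilon\searrow 0$. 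The continuity $X\in L^{2}(\Omega;C([0,T];H))$ follows from the strong form of the approximating equations and Doob's inequality applied to the martingale parts, together with the uniform estimates above.

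For uniqueness and the stability bound, given two SVI solutions $X,Y$ with initial data $x,y$ I would plug the approximate solutions $X^{\varepsilon,\lambda},Y^{\varepsilon,\lambda}$ as admissible test processes (they verify \eqref{eq:test-eq} with $G^{\varepsilon,\lambda}=\varepsilon\Delta-\nabla\Phi_\lambda$) into the SVI for $Y$ and $X$ respectively, sum the two inequalities so that the $\Phi$-contributions cancel in the limit, and obtain
\begin{equation*}
\E\|X_t-Y_t\|_H^{2}\le\E\|x-y\|_H^{2},
\end{equation*}
which simultaneously yields uniqueness (take $x=y$) and the contraction estimate. The hardest step will be the $H^{2}$ a priori bound: since the $B_i$ are unbounded on $H$, naive differentiation of the equation generates second-order cross-terms that do not close; only the combined use of the Killing-type structure, the Neumann boundary invariance of Lemma \ref{lem:Neumann-invariance}, and the resolvent commutation of Lemma \ref{lem:shigekawa} makes these terms cancel cleanly, producing a bound that is uniform in $\varepsilon$ and $\lambda$ and survives the passage to the limit in $\Phi$.
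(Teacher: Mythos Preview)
Your overall architecture matches the paper's, but two steps do not close as written.

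\textbf{Limit passage.} You invoke weak compactness in $L^{2}([0,T]\times\Omega;H)$ to produce the limit $X$ and then appeal to Doob's inequality for time-continuity. The paper does \emph{not} proceed this way: it proves that $(X^{\varepsilon,\lambda})$ is Cauchy in $L^{2}(\Omega;C([0,T];H))$, first in $\lambda$ (for $\varepsilon$ fixed) and then in $\varepsilon$. The $\lambda$-step uses the pointwise inequality $\langle\Psi^{\lambda_{1}}(\xi)-\Psi^{\lambda_{2}}(\zeta),\xi-\zeta\rangle\ge -C(\lambda_{1}+\lambda_{2})(1+|\xi|^{2}+|\zeta|^{2})$ together with a \emph{uniform in $\lambda$} bound on $\E\int_{0}^{T}\|X^{\varepsilon,\lambda}\|_{H^{1}}^{2}$, which is exactly what the $H^{2}$ estimate delivers. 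The $\varepsilon$-step uses the same $H^{2}$ bound via $\varepsilon\E\int_{0}^{T}\|\Delta X^{\varepsilon,\lambda}\|_{H}^{2}\le C$ (uniform in $\lambda$) and a Young splitting $-\varepsilon(\Delta X^{\varepsilon},X^{\varepsilon}-Z)\ge -\tfrac{1}{2}\varepsilon^{4/3}\|\Delta X^{\varepsilon}\|_{H}^{2}-\tfrac{1}{2}\varepsilon^{2/3}\|X^{\varepsilon}-Z\|_{H}^{2}$. Weak compactness alone gives neither $X\in L^{2}(\Omega;C([0,T];H))$ nor control of the viscosity term in the SVI; the $H^{2}$ estimate is not a technical refinement but the mechanism that drives both strong limits. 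Relatedly, to \emph{obtain} the $H^{2}$ estimate and even to solve the approximating equation with unbounded $B_{i}$, the paper inserts a third layer $B_{i}^{\delta}=B_{i}J_{\delta}$ and $J_{\delta}\operatorname{div}\Psi^{\lambda}(\nabla J_{\delta}\cdot)$, invokes \cite{BBHT} for well-posedness, and only then lets $\delta\to 0$; ``standard monotone-operator theory'' does not cover gradient noise on $H$.

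\textbf{Uniqueness.} Your symmetric scheme (insert $X^{\varepsilon,\lambda}$ into the SVI for $Y$ and vice versa, then add) requires the $\Phi$-contributions to cancel in the limit, i.e.\ $\E\int\Phi(X^{\varepsilon,\lambda})\to\E\int\Phi(X)$. You only have the $\liminf$ inequality from lower semi-continuity, so the sum does not close. The paper's argument is asymmetric: test the SVI for the \emph{arbitrary} solution $X$ with the approximation $Y^{\varepsilon,\lambda,n}$ only, and use the pointwise inequality
\[
-(\operatorname{div}\Psi^{\lambda}(\nabla Y^{\varepsilon,\lambda,n}),z-Y^{\varepsilon,\lambda,n})_{H}+\Phi(Y^{\varepsilon,\lambda,n})\le\Phi(z)+C\lambda\bigl(1+\Phi(Y^{\varepsilon,\lambda,n})\bigr),\quad z\in H^{1},
\]
extended to $z=X_{s}$ via the lower semi-continuous envelope characterization of $\Phi$. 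This produces $\E\|X_{t}-Y_{t}^{\varepsilon,\lambda,n}\|_{H}^{2}\le\E\|x-y^{n}\|_{H}^{2}+o(1)$ directly, without any need to match $\Phi$-terms on both sides.
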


\begin{proof}
Recall the notation $H=L^{2}$, $S=H^{1}$. We first assume an initial
condition $x_{0}\in L^{2}(\Omega,\Fcal_{0},\P;S)$ and, in the last
part of the proof, we shall generalize to $x_{0}\in L^{2}(\Omega,\Fcal_{0},\P;H)$.

In order to prove the existence of the solution, we need to take a
threefold approximation for equation \eqref{eq:main_equation-2}.
Therefore, we consider the following regularized equation,

\begin{equation}
\left\{ \begin{array}{ll}
dX_{t}^{\varepsilon,\lambda,\delta}=J_{\delta}\operatorname{div}\Psi^{\lambda}\left(\nabla J_{\delta}X_{t}^{\varepsilon,\lambda,\delta}\right)dt+\varepsilon\Delta X_{t}^{\varepsilon,\lambda,\delta}dt & ~\text{in}~\left(0,T\right)\times\mathcal{O},\\
\quad\quad\quad+\dfrac{1}{2}\sum\limits _{i=1}^{N}(B_{i}^{\delta})^{2}(X_{t}^{\varepsilon,\lambda,\delta})dt+\sum\limits _{i=1}^{N}B_{i}^{\delta}\left(X_{t}^{\varepsilon,\lambda,\delta}\right)d\beta_{t}^{i},\\
\medskip X_{0}^{\varepsilon,\lambda,\delta}=x_{0}, & ~\text{in}~\mathcal{O},\\
\dfrac{\partial X_{t}^{\varepsilon,\lambda,\delta}}{\partial\nu}=0, & ~\text{on}~\left(0,T\right)\times\partial\mathcal{O},
\end{array}\right.\label{app1}
\end{equation}
where $\Psi^{\lambda}$, $\lambda>0$, is the Yosida approximation
of $\Psi$ (cf. \cite[p. 37]{nonlin}), $J_{\delta}$, $\delta>0$,
is the resolvent of the Neumann Laplacian $L:=-\Delta$, i.e., $J_{\delta}=\left(\operatorname{Id}-\delta\Delta\right)^{-1}$
and $B_{i}^{\delta}(\cdot)=B_{i}(J_{\delta}(\cdot))$.

By \cite[Theorem 2.4]{BBHT}, we have that the there exists a unique
$\{\mathcal{F}_{t}\}$-adapted solution with $X\in C([0,T];H)\cap L^{2}([0,T];S)$
$\mathbb{P}$-a.s. such that \eqref{app1} holds $\mathbb{P}$-a.s.
in $L^{2}([0,T];S^{\ast})$. We note that our hypotheses guarantee
that the conditions needed for \cite[Theorem 2.4]{BBHT} are satisfied.

\emph{Step I (the estimate in $H^{2}\left(\mathcal{O}\right)$):}

Considering $J_{\alpha}$, $\alpha>0$, the resolvent of the Neumann
Laplace operator $-\Delta$, we define the sequence of semi-inner
products on $H$ 
\[
\left(u,v\right)_{\alpha}:=\left((-\Delta)_{\alpha}u,v\right)_{H},\quad u,v\in H,
\]
where $(-\Delta)_{\alpha}$ is the Yosida approximation of the operator
$-\Delta$, i.e., $(-\Delta)_{\alpha}=\frac{{1}}{\alpha}(\operatorname{Id}-J_{\alpha})=-\Delta J_{\alpha}$
and the induced semi-norms 
\[
\left\Vert u\right\Vert _{\alpha}:=\left\Vert (-\Delta)_{\alpha}^{\frac{1}{2}}u\right\Vert _{H},\quad u\in H,
\]
where $(-\Delta)_{\alpha}^{\frac{1}{2}}$ denotes the operator square
root.

Since they are continuous on $H$ and for all $u\in S$ we have that
\[
\left\Vert u\right\Vert _{\alpha}\longrightarrow\left\Vert \nabla u\right\Vert _{L^{2}(\Ocal;\R^{d})}\quad\text{as}~\alpha\rightarrow0.
\]

We shall apply the Itô formula \cite[Theorem 4.2.5]{concise} to \eqref{app1}
with the functional $u\mapsto\Vert u\Vert_{\alpha}^{2}$, for $\varepsilon,~\lambda$
and $\delta$ fixed, and we get that for all $t\in[0,T]$ and $\P$-a.s.
\begin{eqnarray}
\left\Vert X_{t}^{\varepsilon,\lambda,\delta}\right\Vert _{\alpha}^{2} & = & \left\Vert x_{0}\right\Vert _{\alpha}^{2}+2\int_{0}^{t}\left((-\Delta)_{\alpha}X_{s}^{\varepsilon,\lambda,\delta},J_{\delta}\operatorname{div}\Psi^{\lambda}\left(\nabla J_{\delta}X_{s}^{\varepsilon,\lambda,\delta}\right)\right)_{H}ds\label{ito1}\\
 &  & +2\varepsilon\int_{0}^{t}\phantom{\!}_{S}\langle(-\Delta)_{\alpha}X_{s}^{\varepsilon,\lambda,\delta},\Delta X_{s}^{\varepsilon,\lambda,\delta}\rangle_{S^{\ast}}ds\notag\\
 &  & +\sum_{i=1}^{N}\int_{0}^{t}\left((-\Delta)_{\alpha}X_{s}^{\varepsilon,\lambda,\delta},(B_{i}^{\delta})^{2}X_{s}^{\varepsilon,\lambda,\delta}\right)_{H}ds\notag\\
 &  & +2\sum_{i=1}^{N}\int_{0}^{t}\left((-\Delta)_{\alpha}X_{s}^{\varepsilon,\lambda,\delta},B_{i}^{\delta}X_{s}^{\varepsilon,\lambda,\delta}d\beta_{s}^{i}\right)_{H}\notag\\
 &  & +\sum_{i=1}^{N}\int_{0}^{t}\left\Vert (-\Delta)_{\alpha}^{\frac{1}{2}}B_{i}^{\delta}X_{s}^{\varepsilon,\lambda,\delta}\right\Vert _{H}^{2}ds.\notag
\end{eqnarray}
By well-known properties of the resolvent (as symmetry in $L^{2}$,
commutation with the Yosida approximation) and keeping in mind that
the the operators $B_{i}$ commute with the resolvent of the Neumann
Laplace by Theorem \ref{thm:shigekawa} and Lemma \ref{lem:shigekawa},
we can easily see that, by setting,
\[
\Phi_{\lambda}(u):=\int_{\Ocal}\varphi^{\lambda}(\nabla u)\,d\xi,\quad u\in S,
\]
and setting $v=X_{s}^{\varepsilon,\lambda,\delta}$, that 
\[
\left((-\Delta)_{\alpha}v,J_{\delta}\operatorname{div}\Psi^{\lambda}\left(\nabla J_{\delta}v\right)\right)_{H}=-\frac{{1}}{\alpha}(v-J_{\alpha}v,\partial(\Phi_{\lambda}\circ J_{\delta})v)_{H},
\]
cf. \cite[Proposition II.7.8]{Show} for the chain rule for subdifferentials.
By using the argument of \cite[Equation (3.7)]{GessToelle15}\footnote{For Dirichlet boundary conditions on piecewise convex domains, this
result has been proved directly without the use of heat kernel estimates
in in \cite[Appendix]{VBMR}.} (here, the convexity assumption on the boundary is needed, see also
\cite[Example 7.11]{GessToelle14}, where the heat kernel estimates
of \cite{Wang:2009bc,WangYan13} are applied), we see that
\[
\begin{aligned} & \frac{{1}}{\alpha}(J_{\alpha}v-v,\partial(\Phi_{\lambda}\circ J_{\delta})v)_{H}\\
\le & \frac{1}{\alpha}\left(\Phi_{\lambda}(J_{\delta}J_{\alpha}v)-\Phi_{\lambda}(J_{\delta}v)\right)=\frac{1}{\alpha}\left(\Phi_{\lambda}(J_{\alpha}J_{\delta}v)-\Phi_{\lambda}(J_{\delta}v)\right)\le0.
\end{aligned}
\]
Note that, since $v\in H^{1}(\Ocal)$, we have that 
\[
\phantom{\!}_{S}\langle(-\Delta)_{\alpha}v,\Delta v\rangle_{S^{\ast}}\le-\left\Vert (-\Delta)_{\alpha}v\right\Vert _{H}^{2}.
\]
To see this, just take into account that
\[
0\le\frac{1}{\alpha}(\nabla v-\nabla J_{\alpha}v,\nabla v-\nabla J_{\alpha}v)_{L^{2}(\Ocal;\R^{d})}=((-\Delta)_{\alpha}v,\Delta J_{\alpha}v)_{H}-\phantom{\!}_{S}\langle(-\Delta)_{\alpha}v,\Delta v\rangle_{S^{\ast}}.
\]
Furthermore, by commutation (see Theorem \ref{thm:shigekawa} and
Lemma \ref{lem:shigekawa}),
\[
\begin{aligned} & \left((-\Delta)_{\alpha}v,(B_{i}^{\delta})^{2}v\right)_{H}\\
= & -\left((-\Delta)_{\alpha}v,J_{\delta}B_{i}^{\ast}B_{i}^{\delta}v\right)_{H}=-\left(B_{i}^{\delta}(-\Delta)_{\alpha}v,B_{i}^{\delta}v\right)_{H}\\
= & -\left(B_{i}^{\delta}(-\Delta)_{\alpha}^{\frac{1}{2}}v,B_{i}^{\delta}(-\Delta)_{\alpha}^{\frac{1}{2}}v\right)_{H}=-\left\Vert (-\Delta)_{\alpha}^{\frac{1}{2}}B_{i}^{\delta}v\right\Vert _{H}^{2}.
\end{aligned}
\]
By going back and replacing in \eqref{ito1} we get that $\P\otimes ds$-a.s.,
\begin{eqnarray}
\left\Vert X_{t}^{\varepsilon,\lambda,\delta}\right\Vert _{\alpha}^{2} & \leq & \left\Vert x_{0}\right\Vert _{\alpha}^{2}-2\varepsilon\int_{0}^{t}\left\Vert (-\Delta)_{\alpha}X_{s}^{\varepsilon,\lambda,\delta}\right\Vert _{H}^{2}ds\label{ito2}\\
 &  & -\sum_{i=1}^{N}\int_{0}^{t}\left\Vert \left(-\Delta\right)_{\alpha}^{\frac{1}{2}}B_{i}^{\delta}\left(X_{s}^{\varepsilon,\lambda,\delta}\right)\right\Vert _{H}^{2}ds\nonumber \\
 &  & +2\sum_{i=1}^{N}\int_{0}^{t}\left(\left(-\Delta\right)_{\alpha}X_{s}^{\varepsilon,\lambda,\delta},B_{i}^{\delta}X_{s}^{\varepsilon,\lambda,\delta}d\beta_{s}^{i}\right)_{H}\nonumber \\
 &  & +\sum_{i=1}^{N}\int_{0}^{t}\left\Vert \left(-\Delta\right)_{\alpha}^{\frac{1}{2}}B_{i}^{\delta}\left(X_{s}^{\varepsilon,\lambda,\delta}\right)\right\Vert _{H}^{2}ds.\nonumber 
\end{eqnarray}
Taking the expectation and letting $\alpha\rightarrow0$ yields 
\begin{equation}
\mathbb{E}\left\Vert \nabla X_{t}^{\varepsilon,\lambda,\delta}\right\Vert _{L^{2}(\Ocal;\R^{d})}^{2}+2\varepsilon\mathbb{E}\int_{0}^{t}\left\Vert \Delta X_{s}^{\varepsilon,\lambda,\delta}\right\Vert _{H}^{2}ds\leq\E\left\Vert \nabla x_{0}\right\Vert _{L^{2}(\Ocal;\R^{d})}^{2}.\label{H2}
\end{equation}

\emph{Step II ($\delta\rightarrow0$):}

We shall pass to the limit in \eqref{app1} for $\delta\rightarrow0$
by using Theorem 2.2 from \cite{BBHT}. Note that Fatou's lemma (after
passing on to an a.e. convergent subsequence) and $\Phi_{\lambda}(J_{\delta}\cdot)\le\Phi_{\lambda}(\cdot)$
(which holds e.g. by \cite[Example 7.11]{GessToelle14}) imply that
$\Phi_{\lambda}\circ J_{\delta}\longrightarrow\Phi_{\lambda}$ in
Mosco sense as $\delta\to0$ (for the terminology, see \cite{A84}).
Therefore, we have that 
\[
J_{\delta}\operatorname{div}\Psi^{\lambda}\left(\nabla J_{\delta}\left(\cdot\right)\right)+\varepsilon\Delta\left(\cdot\right)\overset{G}{\longrightarrow}\operatorname{div}\Psi^{\lambda}\left(\nabla\left(\cdot\right)\right)+\varepsilon\Delta\left(\cdot\right),
\]
as $\delta\to0$ and so for the corresponding inverse subdifferential
operators\footnote{Note that, $\Phi^{n}\to\Phi$ in Mosco sense implies that $\partial\Phi^{n}\to\partial\Phi$
in $G$-sense and $(\partial\Phi^{n})^{-1}\to(\partial\Phi)^{-1}$
in $G$-sense, see \cite{nonlin,A84}.}. Also, it is clear that for $v\in S$, 
\[
\nabla(J_{\delta}v)\longrightarrow\nabla v,\quad\text{strongly in }L^{2}(\Ocal;\R^{d}),
\]
as $\delta\to0$, which is sufficient for the strong convergence of
the $C_{0}$-groups of linear operators associated to $B_{i}^{\delta}$
to the $C_{0}$-group associated to $B_{i}$, see e.g. \cite{Brezis:1972wm}.
Therefore, we can apply \cite[Theorems 2.2 and 2.3]{BBHT} (note that
we do not need that the semigroups converge in $C^{1}([0,T];H)$,
as we do not assume any time dependence for our noise coefficients)
and obtain that $\P$-a.s. as $\delta\to0$, 
\begin{eqnarray*}
X^{\varepsilon,\lambda,\delta} & \longrightarrow & X^{\varepsilon,\lambda},\quad\text{weakly in }L^{2}\left([0,T];S\right)\text{ and}\\
 & \phantom{\longrightarrow} & \phantom{X^{\varepsilon,\lambda},}\quad\text{\,\ weakly\ensuremath{^{\ast}}in }L^{\infty}\left([0,T];H\right).
\end{eqnarray*}

Combining with \eqref{H2}, we get by weak lower semicontinuity of
the norm that 
\[
\underset{t\in\left[0,T\right]}{\operatorname{ess\,sup}\displaylimits\,}\mathbb{E}\left\Vert \nabla X_{t}^{\varepsilon,\lambda}\right\Vert _{L^{2}(\Ocal;\R^{d})}^{2}+2\varepsilon\mathbb{E}\int_{0}^{T}\left\Vert \Delta X_{s}^{\varepsilon,\lambda}\right\Vert _{H}^{2}ds\le\E\left\Vert \nabla x_{0}\right\Vert _{L^{2}(\Ocal;\R^{d})}^{2}.
\]

We have proved that there exists a strong solution (in the sense of
\cite{BBHT}) to 
\begin{equation}
\left\{ \begin{array}{ll}
dX_{t}^{\varepsilon,\lambda}=\operatorname{div}\Psi^{\lambda}\left(\nabla X_{t}^{\varepsilon,\lambda}\right)\,dt+\varepsilon\Delta X_{t}^{\varepsilon,\lambda}\,dt & ~\text{in}~\left(0,T\right)\times\mathcal{O},\\
\quad\quad\quad+\dfrac{1}{2}\sum\limits _{i=1}^{N}B_{i}^{2}(X_{t}^{\varepsilon,\lambda})dt+\sum\limits _{i=1}^{N}B_{i}(X_{t}^{\varepsilon,\lambda})\,d\beta_{t}^{i},\\
\medskip X_{0}^{\varepsilon,\lambda}=x_{0}, & ~\text{in}~\mathcal{O},\\
\dfrac{\partial X_{t}^{\varepsilon,\lambda}}{\partial\nu}=0, & ~\text{on}~\left(0,T\right)\times\partial\mathcal{O},
\end{array}\right.\label{app1-1}
\end{equation}
for initial datum $x_{0}\in L^{2}(\Omega;S)$ which is of the particular
form \eqref{eq:test-eq} as claimed in Remark \ref{rem:test-func-rem}.

\emph{Step III ($\lambda\rightarrow0$):}

By applying the Itô formula with $u\mapsto\frac{1}{2}\left\Vert u\right\Vert _{H}^{2}$
and the expectation to the difference 
\begin{eqnarray*}
d\left(X_{t}^{\varepsilon,\lambda}-Z_{t}\right) & = & \left(\operatorname{div}\left(\Psi^{\lambda}\left(\nabla X_{t}^{\varepsilon,\lambda}\right)\right)+\varepsilon\Delta X_{t}^{\varepsilon,\lambda}-G_{t}\right)dt\\
 &  & +\frac{1}{2}\sum\limits _{i=1}^{N}\left(B_{i}^{2}(X_{t}^{\varepsilon,\lambda})-B_{i}^{2}(Z_{t})\right)dt+\sum_{i=1}^{N}B_{i}\left(X_{t}^{\varepsilon,\lambda}-Z_{t}\right)d\beta_{t}^{i},
\end{eqnarray*}
for $Z$ and $G$ considered as in Definition \ref{def:SVI}, we see
that $X^{\varepsilon,\lambda}$ is also a SVI solution to \eqref{app1},
i.e. 
\begin{eqnarray}
 &  & \frac{1}{2}\mathbb{E}\left\Vert X_{t}^{\varepsilon,\lambda}-Z_{t}\right\Vert _{H}^{2}+\mathbb{E}\int_{0}^{t}\int_{\mathcal{O}}\varphi^{\lambda}\left(\nabla X_{s}^{\varepsilon,\lambda}\right)d\xi ds\label{defap2}\\
 &  & +\varepsilon\mathbb{E}\int_{0}^{t}\int_{\mathcal{O}}\left\langle \nabla X_{s}^{\varepsilon,\lambda},\nabla X_{s}^{\varepsilon,\lambda}-\nabla Z_{s}\right\rangle d\xi ds\notag\\
 & \leq & \frac{1}{2}\mathbb{E}\left\Vert x_{0}-Z_{0}\right\Vert _{H}^{2}+\mathbb{E}\int_{0}^{t}\int_{\mathcal{O}}\varphi^{\lambda}\left(\nabla Z_{s}\right)d\xi ds\notag\\
 &  & -\mathbb{E}\int_{0}^{t}\int_{\mathcal{O}}G_{s}\left(X_{s}^{\varepsilon,\lambda}-Z_{s}\right)d\xi ds.\notag
\end{eqnarray}

In order to pass to the limit we shall need the following a-priori
estimates.

First we apply the Itô formula for the functional $u\mapsto\frac{1}{2}\left\Vert u\right\Vert _{H}^{2}$
to the equation 
\begin{equation}
\left\{ \begin{array}{ll}
dX_{t}^{\varepsilon,\lambda}=\operatorname{div}\Psi^{\lambda}\left(\nabla X_{t}^{\varepsilon,\lambda}\right)dt+\varepsilon\Delta X_{t}^{\varepsilon,\lambda}dt & ~\text{in}~\left(0,T\right)\times\mathcal{O}\\
\quad\quad\quad\quad\quad+\dfrac{1}{2}\sum\limits _{i=1}^{N}B_{i}^{2}(X_{t}^{\varepsilon,\lambda})dt+\sum\limits _{i=1}^{N}B_{i}\left(X_{t}^{\varepsilon,\lambda}\right)d\beta_{t}^{i},\\
\medskip X_{0}^{\varepsilon,\lambda}=x_{0}, & ~\text{in}~\mathcal{O}\\
\dfrac{\partial X_{t}^{\varepsilon,\lambda}}{\partial\nu}=0, & ~\text{on}~\left(0,T\right)\times\partial\mathcal{O}
\end{array}\right.\label{app2}
\end{equation}
in order to get that 
\begin{eqnarray}
 &  & \frac{1}{2}\mathbb{E}\left\Vert X_{t}^{\varepsilon,\lambda}\right\Vert _{H}^{2}+\mathbb{E}\int_{0}^{t}\int_{\mathcal{O}}\varphi^{\lambda}\left(\nabla X_{s}^{\varepsilon,\lambda}\right)d\xi ds+\varepsilon\mathbb{E}\int_{0}^{t}\int_{\mathcal{O}}\left\vert \nabla X_{s}^{\varepsilon,\lambda}\right\vert ^{2}d\xi ds\label{eq:lambdaepsilonito}\\
 & \leq & \frac{1}{2}\E\left\Vert x_{0}\right\Vert _{H}^{2},\quad\forall\lambda,~t\in\left[0,T\right].\nonumber 
\end{eqnarray}

Moreover, in order to verify \eqref{eq:reg}, we see that by the Mosco
convergence (see e.g. \cite[Proposition 6.2]{GessToelle15}) 
\[
\Phi_{\lambda}=\int_{\Ocal}\varphi^{\lambda}(\nabla\cdot)\,d\xi\longrightarrow\Phi\quad\text{in Mosco sense as \ensuremath{\lambda\to0}},
\]
and Fatou's lemma (after passing to an a.e. convergent subsequence
--- strong $L^{2}$-convergence is justified below), we get that 
\[
\mathbb{E}\int_{0}^{t}\Phi\left(X_{s}^{\varepsilon}\right)ds\leq\liminf_{\lambda\to0}\mathbb{E}\int_{0}^{t}\int_{\mathcal{O}}\varphi^{\lambda}\left(\nabla X_{s}^{\varepsilon,\lambda}\right)d\xi ds<\infty.
\]
On the other hand, also by the Itô formula and Lemma \ref{lem:B-properties}
(i), we get that $\P$-a.s., $t\in[0,T]$, 
\begin{eqnarray*}
 &  & \frac{1}{2}\left\Vert X_{t}^{\varepsilon,\lambda_{1}}-X_{t}^{\varepsilon,\lambda_{2}}\right\Vert _{H}^{2}\\
 &  & +\int_{0}^{t}\left(\Psi^{\lambda_{1}}\left(\nabla X_{s}^{\varepsilon,\lambda_{1}}\right)-\Psi^{\lambda_{2}}\left(\nabla X_{s}^{\varepsilon,\lambda_{2}}\right),\nabla X_{s}^{\varepsilon,\lambda_{1}}-\nabla X_{s}^{\varepsilon,\lambda_{2}}\right)_{L^{2}(\Ocal;\R^{d})}ds\\
 &  & +\varepsilon\int_{0}^{t}\left\Vert \nabla X_{s}^{\varepsilon,\lambda_{1}}-\nabla X_{s}^{\varepsilon,\lambda_{2}}\right\Vert _{L^{2}(\Ocal;\R^{d})}^{2}ds\\
 & = & \sum\limits _{i=1}^{N}\int_{0}^{t}\left(B_{i}\left(X_{s}^{\varepsilon,\lambda_{1}}-X_{s}^{\varepsilon,\lambda_{2}}\right)d\beta_{s}^{i}~,X_{s}^{\varepsilon,\lambda_{1}}-X_{s}^{\varepsilon,\lambda_{2}}\right)_{H}.
\end{eqnarray*}

By using \cite[eq. (A.6) in Appendix A]{GessToelle15}, we have for
all $\xi,\zeta\in\R^{d}$ and some positive constant $C>0$ that

\[
\langle\Psi^{\lambda_{1}}(\xi)-\Psi^{\lambda_{2}}(\zeta),\xi-\zeta\rangle\geq-C\left(\lambda_{1}+\lambda_{2}\right)\left(1+\left\vert \xi\right\vert ^{2}+\left\vert \zeta\right\vert ^{2}\right).
\]
We obtain $\P\otimes ds$-a.s. that 
\begin{eqnarray*}
 &  & \left(\Psi^{\lambda_{1}}\left(\nabla X^{\varepsilon,\lambda_{1}}\right)-\Psi^{\lambda_{2}}\left(\nabla X^{\varepsilon,\lambda_{2}}\right),\nabla X^{\varepsilon,\lambda_{1}}-\nabla X^{\varepsilon,\lambda_{2}}\right)_{L^{2}\left(\mathcal{O};\mathbb{R}^{d}\right)}\\
 & \geq & -C\left(\lambda_{1}+\lambda_{2}\right)\int\limits _{\mathcal{O}}\left(1+\left\vert \nabla X^{\varepsilon,\lambda_{1}}\right\vert ^{2}+\left\vert \nabla X^{\varepsilon,\lambda_{2}}\right\vert ^{2}\right)d\xi\\
 & \geq & -C\left(\lambda_{1}+\lambda_{2}\right)\left(1+\left\vert X^{\varepsilon,\lambda_{1}}\right\vert _{S}^{2}+\left\vert X^{\varepsilon,\lambda_{2}}\right\vert _{S}^{2}\right),
\end{eqnarray*}
and then, by \eqref{H2} and \eqref{eq:lambdaepsilonito}, we get
for the expectation, that 
\[
\mathbb{E}\int\limits _{0}^{t}\left(\Psi^{\lambda_{1}}\left(\nabla X_{s}^{\varepsilon,\lambda_{1}}\right)-\Psi^{\lambda_{2}}\left(\nabla X_{s}^{\varepsilon,\lambda_{2}}\right),\nabla X_{s}^{\varepsilon,\lambda_{1}}-\nabla X_{s}^{\varepsilon,\lambda_{2}}\right)_{L^{2}\left(\mathcal{O};\mathbb{R}^{d}\right)}ds\geq-C\left(\lambda_{1}+\lambda_{2}\right).
\]

Now, by the Burkholder-Davis-Gundy inequality, taking \eqref{eq:uBu}
into account, and by the above computation concerning $\Psi^{\lambda}$,
we obtain that for all $t\in[0,T]$, 
\[
\frac{{1}}{2}\mathbb{E}\underset{0\leq s\leq t}{\sup}\left\Vert X_{s}^{\varepsilon,\lambda_{1}}-X_{s}^{\varepsilon,\lambda_{2}}\right\Vert _{H}^{2}+\varepsilon\E\int_{0}^{t}\left\Vert \nabla X_{s}^{\varepsilon,\lambda_{1}}-\nabla X_{s}^{\varepsilon,\lambda_{2}}\right\Vert _{H}^{2}ds\leq C(\lambda_{1}+\lambda_{2}).
\]

Consequently, we have that 
\begin{equation}
\underset{\lambda\rightarrow0}{\lim}\mathbb{E}\left[\underset{t\in\left[0,T\right]}{\sup}\left\Vert X_{t}^{\varepsilon,\lambda}-X_{t}^{\varepsilon}\right\Vert _{H}^{2}\right]=0.\label{eq:lambda-eps}
\end{equation}

We can now pass to the limit for $\lambda\rightarrow0$ in \eqref{defap2}
in order to obtain (recall that $\int_{\Ocal}\varphi^{\lambda}(\cdot)\,d\xi\le\Phi$)
\begin{eqnarray}
 &  & \frac{1}{2}\mathbb{E}\left\Vert X_{t}^{\varepsilon}-Z_{t}\right\Vert _{H}^{2}+\mathbb{E}\int_{0}^{t}\int_{\mathcal{O}}\varphi\left(\nabla X_{s}^{\varepsilon}\right)d\xi ds\label{defap1}\\
 &  & +\varepsilon\mathbb{E}\int_{0}^{t}\int_{\mathcal{O}}\left\langle \nabla X_{s}^{\varepsilon},\nabla X_{s}^{\varepsilon}-\nabla Z_{s}\right\rangle d\xi ds\notag\\
 & \leq & \frac{1}{2}\mathbb{E}\left\Vert x_{0}-Z_{0}\right\Vert _{H}^{2}+\mathbb{E}\int_{0}^{t}\int_{\mathcal{O}}\varphi\left(\nabla Z_{s}\right)d\xi ds\notag\\
 &  & -\mathbb{E}\int_{0}^{t}\int_{\mathcal{O}}G_{s}\left(X_{s}^{\varepsilon}-Z_{s}\right)d\xi ds.\notag
\end{eqnarray}

\emph{Step IV ($\varepsilon\rightarrow0$):}

Arguing as in the previous step, we get that 
\begin{eqnarray}
 &  & \frac{1}{2}\mathbb{E}\left\Vert X_{t}^{\varepsilon}\right\Vert _{H}^{2}+\mathbb{E}\int_{0}^{t}\int_{\mathcal{O}}\varphi\left(\nabla X_{s}^{\varepsilon}\right)d\xi ds+\varepsilon\mathbb{E}\int_{0}^{t}\int_{\mathcal{O}}\left\vert \nabla X_{s}^{\varepsilon}\right\vert ^{2}d\xi ds\medskip\label{eq:reg_L2}\\
 & \leq & \frac{1}{2}\E\left\Vert x_{0}\right\Vert _{H}^{2},\quad t\in\left[0,T\right].\nonumber 
\end{eqnarray}

By Itô's formula, this time, considering the process for fixed $\lambda>0$
and not $\varepsilon>0$ fixed, as previously, and by monotonicity,
we get that 
\begin{eqnarray*}
 &  & \frac{1}{2}\left\Vert X_{t}^{\varepsilon_{1},\lambda}-X_{t}^{\varepsilon_{2},\lambda}\right\Vert _{H}^{2}e^{-t}\\
 &  & +\int_{0}^{t}e^{-s}\left(\varepsilon_{1}\nabla X_{s}^{\varepsilon_{1},\lambda}-\varepsilon_{2}\nabla X_{s}^{\varepsilon_{2},\lambda},\nabla X_{s}^{\varepsilon_{1},\lambda}-\nabla X_{s}^{\varepsilon_{2},\lambda}\right)_{L^{2}(\Ocal;\R^{d})}ds\\
 &  & +\frac{1}{2}\int_{0}^{t}e^{-s}\|X_{s}^{\varepsilon_{1},\lambda}-X_{s}^{\varepsilon_{2},\lambda}\|_{H}^{2}\,ds\\
 & \leq & \sum\limits _{i=1}^{N}\int_{0}^{t}e^{-s}\left(B_{i}\left(X_{s}^{\varepsilon_{1},\lambda}-X_{s}^{\varepsilon_{2},\lambda}\right)d\beta_{s}^{i}~,X_{s}^{\varepsilon_{1},\lambda}-X_{s}^{\varepsilon_{2},\lambda}\right)_{H}
\end{eqnarray*}
Since $\P\otimes ds$-a.s., 
\begin{eqnarray*}
 &  & \left(\varepsilon_{1}\nabla X^{\varepsilon_{1},\lambda}-\varepsilon_{2}\nabla X^{\varepsilon_{2},\lambda},\nabla X^{\varepsilon_{1},\lambda}-\nabla X^{\varepsilon_{2},\lambda}\right)_{L^{2}(\Ocal;\R^{d})}\medskip\\
 & = & -\left(\varepsilon_{1}\Delta X^{\varepsilon_{1},\lambda}-\varepsilon_{2}\Delta X^{\varepsilon_{2},\lambda},X^{\varepsilon_{1},\lambda}-X^{\varepsilon_{2},\lambda}\right)_{H}\medskip\\
 & \geq & -\frac{1}{2}\left(\varepsilon_{1}^{2}\left\Vert \Delta X^{\varepsilon_{1},\lambda}\right\Vert _{H}^{2}+\varepsilon_{2}^{2}\left\Vert \Delta X^{\varepsilon_{2},\lambda}\right\Vert _{H}^{2}\right)-\frac{1}{2}\left\Vert X^{\varepsilon_{1},\lambda}-X^{\varepsilon_{2},\lambda}\right\Vert _{H}^{2}
\end{eqnarray*}
and by using again the Burkholder-Davis-Gundy inequality and \eqref{eq:uBu},
we get that for all $t\in[0,T]$, 
\begin{eqnarray*}
 &  & \mathbb{E}\underset{0\leq s\leq t}{\sup}e^{-s}\left\Vert X_{s}^{\varepsilon_{1},\lambda}-X_{s}^{\varepsilon_{2},\lambda}\right\Vert _{H}^{2}\\
 & \leq & \varepsilon_{1}^{2}\mathbb{E}\int_{0}^{t}e^{-s}\left\Vert \Delta X_{s}^{\varepsilon_{1},\lambda}\right\Vert _{H}^{2}ds+\varepsilon_{2}^{2}\mathbb{E}\int_{0}^{t}e^{-s}\left\Vert \Delta X_{s}^{\varepsilon_{2},\lambda}\right\Vert _{H}^{2}ds.
\end{eqnarray*}

Keeping in mind that for initial data in $L^{2}(\Omega;S)$, by \emph{Step
II} above, in particular, by \eqref{H2}, 
\begin{equation}
\varepsilon\mathbb{E}\int_{0}^{T}\left\Vert \Delta X_{s}^{\varepsilon,\lambda}\right\Vert _{H}^{2}ds\le C,\label{H22}
\end{equation}
uniformly in $\lambda>0$, we obtain by \eqref{eq:lambda-eps}, for
letting first $\lambda\to0$, 
\[
\underset{\varepsilon\rightarrow0}{\lim}\mathbb{E}\left[\underset{t\in\left[0,T\right]}{\sup}\left\Vert X_{t}^{\varepsilon}-X_{t}\right\Vert _{H}^{2}\right]=0
\]
for some limiting process $X\in C([0,T];L^{2}(\Omega;H))$. Note that
by weak convergence in $L^{2}([0,T];S)$, we get that $\P\otimes dt$-a.s.
$X\in H^{1}(\Ocal)$.

Finally, by computing $\P\otimes ds$-a.e. 
\begin{eqnarray*}
\varepsilon\int_{\mathcal{O}}\left\langle \nabla X^{\varepsilon},\nabla X^{\varepsilon}-\nabla Z\right\rangle d\xi & = & -\varepsilon\int_{\mathcal{O}}\Delta X^{\varepsilon}\left(X^{\varepsilon}-Z\right)d\xi\\
 & \geq & -\frac{1}{2}\varepsilon^{\frac{{4}}{3}}\left\Vert \Delta X^{\varepsilon}\right\Vert _{H}^{2}-\frac{1}{2}\varepsilon^{\frac{{2}}{3}}\left\Vert X^{\varepsilon}-Z\right\Vert _{H}^{2}\medskip
\end{eqnarray*}
and using again \eqref{H22} we can pass to the limit in \eqref{defap1}
and get that $X$ is a continuous SVI solution (which satisfies \eqref{eq:reg}
by passing to the limit in \eqref{eq:reg_L2}) 
\begin{eqnarray*}
 &  & \frac{1}{2}\mathbb{E}\left\Vert X_{t}-Z_{t}\right\Vert _{H}^{2}+\mathbb{E}\int_{0}^{t}\int_{\mathcal{O}}\varphi\left(\nabla X_{s}\right)d\xi ds\\
 & \leq & \frac{1}{2}\mathbb{E}\left\Vert x_{0}-Z_{0}\right\Vert _{H}^{2}+\mathbb{E}\int_{0}^{t}\int_{\mathcal{O}}\varphi\left(\nabla Z_{s}\right)d\xi ds\\
 &  & -\mathbb{E}\int_{0}^{t}\int_{\mathcal{O}}G_{s}\left(X_{s}-Z_{s}\right)d\xi ds.
\end{eqnarray*}

\emph{Step V (general initial conditions):}

In order to conclude the proof of existence we only need to extend
the solution for arbitrary $x\in L^{2}(\Omega,\Fcal_{0},\P;H)$. Let
$X,X^{\ast}$ be continuous SVI solutions starting in $x,x^{\ast}$,
resp. Note that $S$ is dense in $H$, so this follows directly from
\[
\mathbb{E}\left[\underset{t\in\left[0,T\right]}{\sup}\left\Vert X_{t}-X_{t}^{\ast}\right\Vert _{H}^{2}\right]\leq\E\left\Vert x_{0}-x_{0}^{\ast}\right\Vert _{H}^{2},
\]
which can easily be obtained by arguments similar to those from the
previous steps (using monotonicity, Burkholder-Davis-Gundy and \eqref{eq:uBu}).

\end{proof}

\subsection{\label{sub:Uniqueness}Uniqueness}
\begin{proof}[Proof of Theorem \ref{thm:main_thm} (continued).]
 The existence of a continuous SVI solution is proved in the section
above. We follow an argument from \cite{GessToelle15}. Let $X$ be
any continuous SVI solution to \eqref{eq:main_equation} with initial
condition $x\in L^{2}(\Omega,\Fcal_{0},\P;H)$ and let $Y^{\varepsilon,\lambda,n}$
be the strong approximating solution to \eqref{app1-1} with initial
condition $y^{n}\in L^{2}(\Omega,\Fcal_{0},\P;S)$ such that $y^{n}\to y$
in $L^{2}(\Omega,\Fcal_{0},\P;H)$, where $y\in L^{2}(\Omega,\Fcal_{0},\P;H)$.
Then the following variational inequality holds (with $Z_{0}=y^{n}$,
$Z=Y^{\varepsilon,\lambda,n}$, $G=\operatorname{div}\Psi^{\lambda}\left(\nabla Y^{\varepsilon,\lambda,n}\right)+\varepsilon\Delta Y^{\varepsilon,\lambda,n}$),
\[
\begin{split}\frac{{1}}{2}\E\|X_{t}-Y_{t}^{\varepsilon,\lambda,n}\|_{H}^{2} & +\E\int_{0}^{t}\Phi(X_{s})\,ds\\
 & \le\frac{{1}}{2}\E\|x-y^{n}\|_{H}^{2}+\E\int_{0}^{t}\Phi(Y_{s}^{\varepsilon,\lambda,n})\,ds\\
 & -\E\int_{0}^{t}(\varepsilon\Delta Y_{s}^{\varepsilon,\lambda,n}+\div\Psi^{\lambda}(\nabla Y_{s}^{\varepsilon,\lambda,n}),X_{s}-Y_{s}^{\varepsilon,\lambda,n})_{H}\,ds,
\end{split}
\]
for a.e. $t\in[0,T]$.

By \cite[Appendix A]{GessToelle15} for all $z\in H^{1}$ we have
\[
-(\div\Psi^{\lambda}(\nabla Y^{\varepsilon,\lambda,n}),z-Y^{\varepsilon,\lambda,n})_{H}+\Phi(Y^{\varepsilon,\lambda,n})\le\Phi(z)+C\lambda(1+\Phi(Y^{\varepsilon,\lambda,n}))\quad ds\otimes\P-\text{a.e}.
\]
Since $\Phi$ is the lower-semicontinuous envelope of $\tilde{{\Phi}}=\Phi\vert_{H^{1}}$
(i.e., $\Phi$ restricted to $H^{1}$), for $ds\otimes\P$-a.e. $(s,\omega)\in[0,T]\times\Omega$,
we can choose a sequence $z^{m}\in H^{1}$ such that $z^{m}\to X_{s}(\omega)$
in $H$ and $\Phi(z^{m})\to\Phi(X_{s}(\omega))$.

Hence, 
\[
-(\div\Psi^{\lambda}(\nabla Y^{\varepsilon,\lambda,n}),X-Y^{\varepsilon,\lambda,n})_{H}+\Phi(Y^{\varepsilon,\lambda,n})\le\Phi(X)+C\lambda(1+\Phi(Y^{\varepsilon,\lambda,n}))\quad ds\otimes\P-\text{a.e}.
\]
Thus, 
\[
\begin{split}\frac{{1}}{2}\E\|X_{t}-Y_{t}^{\varepsilon,\lambda,n}\|_{H}^{2} & \le\frac{{1}}{2}\E\|x-y^{n}\|_{H}^{2}+C\lambda\E\int_{0}^{t}\left(1+\Phi(Y_{s}^{\varepsilon,\lambda,n})\right)\,ds\\
 & +\frac{1}{2}\E\int_{0}^{t}\left(\varepsilon^{\frac{{4}}{3}}\|\Delta Y_{s}^{\varepsilon,\lambda,n}\|_{H}^{2}+\varepsilon^{\frac{{2}}{3}}\|X_{s}-Y_{s}^{\varepsilon,\lambda,n}\|_{H}^{2}\right)\,ds.
\end{split}
\]
Taking first $\lambda\to0$ then $\varepsilon\to0$ (using the $H^{2}$
bound \eqref{H22}, which is uniform in $\lambda$) and then $n\to\infty$
yields 
\begin{align*}
\E\|X_{t}-Y_{t}\|_{H}^{2}\le & \E\|x-y\|_{H}^{2},
\end{align*}
for a.e. $t\in[0,T]$.

\end{proof}
\appendix

\section{\label{sec:Vector-fields}Vector fields of Killing}

In this section, suppose that $\Ocal\subset\R^{d}$, open, bounded,
with smooth boundary $\partial\Ocal$.
\begin{defn}
A $C^{1}$-vector field $b:\mathcal{\overline{O}}\to\mathbb{R}^{d}$,
is called \emph{Killing vector field}, if the following condition
is satisfied on $\overline{\mathcal{O}}$
\begin{equation}
\partial_{j}b^{i}+\partial_{i}b^{j}=0\quad\forall1\le i,j\le d.\label{eq:kill_basic}
\end{equation}

\end{defn}
The following lemma is based on ideas from \cite{Yano:1965ke}, see
also \cite{Yano:1959bm}.
\begin{lem}
\label{lem:killing_lemma}A (sufficiently smooth) vector field $b:\overline{\mathcal{O}}\to\mathbb{R}^{d}$
is a Killing vector field if and only if
\begin{equation}
\langle\Delta b,b\rangle=0,\quad\operatorname{div}b=0\quad\text{on\;\ensuremath{\overline{\ensuremath{\mathcal{O}}}}.}\label{eq:killing1}
\end{equation}

\begin{equation}
\sum_{1\le i,j\le d}(\partial_{j}b^{i}+\partial_{i}b^{j})\nu_{j}b^{i}=0\quad\text{on\;\ensuremath{\partial\mathcal{O}}},\label{eq:killing2}
\end{equation}
where the Laplace acts componentwise and where $\nu:\partial\mathcal{O}\to\mathbb{R}^{d}$
denotes the outer unit normal of $\mathcal{\overline{O}}$.\end{lem}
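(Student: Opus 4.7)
The plan is to prove the two implications separately. The forward direction $(\Rightarrow)$ is an algebraic consequence of the Killing identity; the reverse direction $(\Leftarrow)$ is a Bochner-type energy argument whose virtue is that the hypotheses \eqref{eq:killing1} and \eqref{eq:killing2} are exactly the terms that arise after two integrations by parts on $\int_{\mathcal{O}} |S|^2\,d\xi$, where $S_{ij} := \partial_j b^i + \partial_i b^j$ is the ``Killing tensor''.

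For $(\Rightarrow)$: suppose $\partial_j b^i + \partial_i b^j = 0$ on $\overline{\mathcal{O}}$. Setting $i = j$ and summing gives $\operatorname{div} b = 0$. Moreover, for each $i$,
\[
\Delta b^i = \sum_{j} \partial_j^2 b^i = -\sum_{j} \partial_j \partial_i b^j = -\partial_i (\operatorname{div} b) = 0,
\]
so $\Delta b \equiv 0$ on $\overline{\mathcal{O}}$, hence $\langle \Delta b, b \rangle = 0$. The boundary condition \eqref{eq:killing2} is trivial since its integrand vanishes identically.

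For $(\Leftarrow)$: it suffices to show $\int_{\mathcal{O}} |S|^2\,d\xi = 0$, as continuity of $S$ then yields $S \equiv 0$ on $\overline{\mathcal{O}}$. Expanding,
\[
|S|^2 = 2|\nabla b|^2 + 2\sum_{i,j} \partial_j b^i\,\partial_i b^j.
\]
Two applications of the Gauss--Green formula give
\[
\int_{\mathcal{O}} |\nabla b|^2 \, d\xi = -\int_{\mathcal{O}} \langle b, \Delta b\rangle\, d\xi + \int_{\partial\mathcal{O}} \sum_{i,j} b^i(\partial_j b^i)\nu_j \, dS^{d-1},
\]
\[
\int_{\mathcal{O}} \sum_{i,j}\partial_j b^i\,\partial_i b^j\,d\xi = -\int_{\mathcal{O}} \sum_i b^i\,\partial_i(\operatorname{div} b)\, d\xi + \int_{\partial\mathcal{O}} \sum_{i,j} b^i(\partial_i b^j)\nu_j\, dS^{d-1}.
\]
By \eqref{eq:killing1} the interior integrals vanish. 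Adding and doubling, the two boundary terms recombine symmetrically into
\[
\int_{\mathcal{O}} |S|^2 \, d\xi = 2\int_{\partial\mathcal{O}} \sum_{i,j} S_{ij}\,b^i\,\nu_j \, dS^{d-1},
\]
which is zero by \eqref{eq:killing2}. Thus $S \equiv 0$ and $b$ is Killing.

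The only point that requires care is verifying that the two surviving boundary terms recombine precisely into the bilinear form appearing in \eqref{eq:killing2} --- this is exactly why the hypothesis is written in that symmetrized form. All integrations by parts are justified by the $C^2$ (or higher) regularity of $b$ and the smoothness of $\partial\mathcal{O}$, so no further technicalities arise.
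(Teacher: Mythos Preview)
Your proof is correct and follows essentially the same Bochner-type argument as the paper: both establish the identity $\tfrac{1}{2}\int_{\mathcal{O}}|S|^{2}\,d\xi=\int_{\partial\mathcal{O}}\sum_{i,j}S_{ij}b^{i}\nu_{j}\,dS^{d-1}$ under the hypotheses \eqref{eq:killing1}, then invoke \eqref{eq:killing2}. The only cosmetic difference is that the paper packages the computation as a single divergence identity (writing $\tfrac{1}{2}|S|^{2}$ as the divergence of an explicit vector field and applying Gauss once), whereas you perform two separate integrations by parts on the two pieces of $|S|^{2}$; your observation in the forward direction that in fact $\Delta b\equiv 0$ (not merely $\langle\Delta b,b\rangle=0$) is a nice sharpening but not needed.
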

\begin{proof}
One easily sees that,
\[
\begin{aligned} & \partial_{j}[(\partial_{j}b^{i}+\partial_{i}b^{j})b^{i}-b^{j}(\partial_{i}b^{i})]\\
= & (\partial_{j}\partial_{j}b^{i})b^{i}+(\partial_{j}b^{i})(\partial_{j}b^{i})\\
 & +(\partial_{j}\partial_{i}b^{j})b^{i}+(\partial_{i}b^{j})(\partial_{j}b^{i})\\
 & -(\partial_{j}b^{j})(\partial_{i}b^{i})-b^{j}(\partial_{j}\partial_{i}b^{i}).
\end{aligned}
\]

Assuming the above conditions, interchanging the order of differentiation,
and summing over $1\le i,j\le d$, we obtain that
\[
\begin{aligned} & \sum_{1\le i,j\le d}\partial_{j}[(\partial_{j}b^{i}+\partial_{i}b^{j})b^{i}-b^{j}(\partial_{i}b^{i})]\\
= & \langle\Delta b,b\rangle+\sum_{1\le i,j\le d}[(\partial_{j}b^{i})^{2}+(\partial_{i}b^{j})(\partial_{j}b^{i})]-(\operatorname{div}b)^{2}\\
 & +\sum_{1\le i,j\le d}(\partial_{j}\partial_{i}b^{j})b^{i}-(\partial_{i}\partial_{j}b^{i})b^{j}\\
= & \frac{{1}}{2}\sum_{1\le i,j\le d}(\partial_{j}b^{i})^{2}+2(\partial_{i}b^{j})(\partial_{j}b^{i})+(\partial_{i}b^{j})^{2}.\\
= & \frac{{1}}{2}\sum_{1\le i,j\le d}(\partial_{j}b^{i}+\partial_{i}b^{j})^{2}.
\end{aligned}
\]
Denote by $S^{d-1}$ the surface element on $\partial\Ocal$. By Gauss's
divergence theorem, we get that, 
\[
\begin{aligned} & \frac{{1}}{2}\sum_{1\le i,j\le d}\int_{\mathcal{\overline{O}}}(\partial_{j}b^{i}+\partial_{i}b^{j})^{2}\,d\xi\\
= & \sum_{1\le i,j\le d}\int_{\partial\mathcal{O}}[(\partial_{j}b^{i}+\partial_{i}b^{j})b^{i}-b^{j}(\partial_{i}b^{i})]\nu_{j}\,dS^{d-1}\\
= & \sum_{1\le i,j\le d}\int_{\partial\mathcal{O}}[(\partial_{j}b^{i}+\partial_{i}b^{j})b^{i}]\nu_{j}\,dS^{d-1}\\
= & 0,
\end{aligned}
\]
and hence $\partial_{j}b^{i}+\partial_{i}b^{j}=0$ on $\mathcal{\overline{O}}$.

Suppose conversely, that $b$ is a Killing vector field. Then \eqref{eq:killing2}
is automatically satisfied. Also, $\operatorname{div}b=0$ by choosing
$i=j$. Clearly, also
\[
0=b^{i}\partial_{j}\partial_{j}b^{i}+b^{i}\partial_{j}\partial_{i}b^{j}=b^{i}\partial_{j}\partial_{j}b^{i}+b^{i}\partial_{i}\partial_{j}b^{j},
\]
and summing over $1\le i,j\le d$ yields,
\[
\langle\Delta b,b\rangle=0,
\]
and hence \eqref{eq:killing1} is satisfied, too.\end{proof}
\begin{thm}
\label{thm:commu} Let $b:\overline{\Ocal}\to\R^{d}$ be a $C^{1}$-vector
field. In order that the first order differential operator $u\mapsto\langle b,\nabla u\rangle$
commutes with the Laplace operator $u\mapsto-\Delta u$ on the space
of smooth functions on $\overline{\Ocal}$ it is necessary and sufficient
that $b$ is a Killing vector field.\end{thm}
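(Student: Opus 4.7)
The plan is to compute the commutator $C(u) := \Delta(\langle b,\nabla u\rangle) - \langle b,\nabla \Delta u\rangle$ explicitly on smooth $u$ and read off both directions from the resulting expression. Expanding with Leibniz's rule (Einstein summation over repeated indices, and interchanging mixed partials in the last term),
\begin{equation*}
\Delta(b^k\partial_k u) \;=\; (\Delta b^k)(\partial_k u) \;+\; 2(\partial_j b^k)(\partial_j\partial_k u) \;+\; b^k\partial_k\Delta u,
\end{equation*}
so that
\begin{equation*}
C(u) \;=\; (\Delta b^k)(\partial_k u) \;+\; 2(\partial_j b^k)(\partial_j\partial_k u).
\end{equation*}
Since $\partial_j\partial_k u$ is symmetric in $(j,k)$, the second sum equals $(\partial_j b^k+\partial_k b^j)(\partial_j\partial_k u)$. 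This symmetrization is the one algebraic observation that drives the whole argument.

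For sufficiency, assume $b$ is a Killing field, i.e., $\partial_j b^k + \partial_k b^j = 0$ on $\overline{\Ocal}$. The Hessian term in $C(u)$ then vanishes identically. To handle the remaining first-order term, differentiate the Killing relation: from $\partial_k\partial_j b^i + \partial_k\partial_i b^j = 0$, $\partial_j\partial_k b^i + \partial_j\partial_i b^k = 0$, and $\partial_i\partial_k b^j + \partial_i\partial_j b^k = 0$, adding the first two and subtracting the third (and using the symmetry of mixed partials) yields $\partial_j\partial_k b^i = 0$ for all $i,j,k$. In particular $\Delta b \equiv 0$ componentwise on $\overline{\Ocal}$, and hence $C(u)\equiv 0$ for every smooth $u$, giving commutation with $-\Delta$.

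For necessity, suppose $C(u)\equiv 0$ for every smooth $u$. Testing against the coordinate functions $u(\xi)=\xi_i$, whose Hessian vanishes, forces $\Delta b^i = 0$ for each $1\le i\le d$. With the first-order term now identically zero, testing then against the quadratic functions $u(\xi)=\tfrac{1}{2}\xi_i\xi_l$ for arbitrary $1\le i,l\le d$ (including $i=l$) reduces the identity $C(u)\equiv 0$ to $\partial_l b^i + \partial_i b^l = 0$ on $\overline{\Ocal}$, which is exactly the Killing condition \eqref{eq:kill_basic}.

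The computation is elementary and the only step requiring a moment's thought is the symmetrization $2(\partial_j b^k)(\partial_j\partial_k u)=(\partial_j b^k+\partial_k b^j)(\partial_j\partial_k u)$; everything else is either Leibniz or a judicious choice of polynomial test function. The coordinate monomials $\xi\mapsto\xi_i$ and $\xi\mapsto\xi_i\xi_l$ are smooth on all of $\R^d$, hence admissible as smooth test functions on $\overline{\Ocal}$, so there is no genuine obstacle in the necessity direction.
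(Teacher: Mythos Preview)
Your argument is correct. The paper does not actually prove this theorem; it simply cites \cite[Theorem 2.1]{Sumitomo:1972if}. Your explicit commutator computation
\[
\Delta(\langle b,\nabla u\rangle)-\langle b,\nabla\Delta u\rangle=(\Delta b^{k})(\partial_{k}u)+(\partial_{j}b^{k}+\partial_{k}b^{j})(\partial_{j}\partial_{k}u)
\]
together with the polynomial test functions $\xi_{i}$ and $\tfrac{1}{2}\xi_{i}\xi_{l}$ is the standard elementary route and is entirely self-contained, which the paper's reference is not. Your cyclic-permutation argument in the sufficiency direction in fact recovers the classical fact that Euclidean Killing fields are affine (all second partials vanish), which is more than you need but perfectly valid.

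One small regularity wrinkle: the theorem as stated assumes only $b\in C^{1}$, yet your expansion of $\Delta(b^{k}\partial_{k}u)$ and your differentiation of the Killing relation both use second derivatives of $b$. This is not really your problem---for the commutator $\Delta(\langle b,\nabla u\rangle)$ to be defined pointwise on general smooth $u$ one already needs $b\in C^{2}$, and the paper works throughout under Assumption~\ref{assu:well} with $b_{i}\in C^{2}(\overline{\Ocal};\R^{d})$---but it would be cleanest to either state the hypothesis as $C^{2}$, or remark that a $C^{1}$ Killing field is automatically affine (the cyclic argument goes through distributionally), so the extra regularity is free in the sufficiency direction.
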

\begin{proof}
See \cite[Theorem 2.1]{Sumitomo:1972if}.
\end{proof}

\specialsection*{Acknowledgements}

\emph{The first author would like to thank Viorel Barbu and Witold
Respondek for helpful comments. The second author would like to thank
the colleagues at the Laboratoire de Mathématiques de l'INSA de Rouen
for a pleasant stay at their department where part of this work was
done. He also would like to express his gratitude to Benjamin Gess,
who made some important remarks on a preliminary version of this work.
Both authors are grateful for some clarifying remarks by the referee.}

\def\cprime{$'$}

\end{document}